\newtheorem{theorem}{Theorem}[section]
\newtheorem{lemma}[theorem]{Lemma}
\newtheorem{proposition}[theorem]{Proposition}
\theoremstyle{definition}
\newtheorem{definition}[theorem]{Definition}
\newtheorem{example}[theorem]{Example}
\newtheorem{remark}[theorem]{Remark}
\DeclarePairedDelimiter\floor{\lfloor}{\rfloor}
\DeclareMathOperator{\sinc}{sinc}
\DeclareMathOperator{\Ci}{Ci}
\DeclareMathOperator{\Si}{Si}
\DeclareMathOperator{\Ei}{Ei}
\DeclareMathOperator\supp{supp}
\DeclareMathOperator\tr{tr}
\DeclareMathOperator{\re}{Re}
\numberwithin{equation}{section}
\def\N{{\mathbb{N}}}
\def\h{{\mathcal{H}}}
\def\R{{\mathbb{R}}}
\def\b{{\mathcal{B}_2}}
\def\Z{{\mathbb{Z}^n}}
\def\z{{\mathbb{Z}}}
\def\c{{\mathbb{C}}}
\def\i{{\mathcal{I}}}
\def\t{{T_{(k,l)}^t}}
\def\p{{\phi}}
\def\mra{{multiresolution\ analysis}}
\def\x{{\chi}}
\newcommand{\abs}[1]{\left\lvert#1\right\rvert}
\newcommand\numberthis{\addtocounter{equation}{1}\tag{\theequation}}
\DeclareMathOperator{\Span}{span}
\newcommand{\be}{\begin{equation}}
\newcommand{\ee}{\end{equation}}
\newcommand{\cO}{\mathcal{O}}
\begin{document}
\title{Twisted B-splines in the complex plane}

\author{S.R. Das}
\address{Department of Mathematics, Indian Institute of Technology, Madras}
\email{santiranjandas100@gmail.com}
\author{P. Massopust}
\address{Center of Mathematics, Technical University of Munich, Garching by Munich}
\email{massopust@ma.tum.de}

\author{R. Radha}
\address{Department of Mathematics, Indian Institute of Technology, Madras}
\email{radharam@iitm.ac.in}
\subjclass[2010]{Primary 42C15; Secondary 41A15, 43A30}



\keywords{$B$-splines; Gramian; {\it Hilbert-Schmidt} operator; {\it Riesz} sequence; Twisted convolution; {\it Weyl} transform; Nonstationary multiresolution analysis.}
\begin{abstract}
In this paper, we introduce the new class of twisted $B$-splines and study some properties of these B-splines. We also investigate the system of twisted translates and the wavelets corresponding to these twisted $B$-splines.
\end{abstract}
\maketitle
\section{Introduction}
Let $X := \{x_0 < x_1 < \cdots < x_k<x_{k+1}\}$ be a set of knots on the real line $\R$. A spline function, or for short a spline, of order $n$ on $[x_0,x_{k+1}]$ with knot set $X$ is a function $f: [x_0,x_{k+1}]\to\mathbb{R}$ such that
\begin{enumerate}
\item[(i)] on each interval $(x_{i-1},x_i)$, $i = 0,1,\ldots, k+1$, $f$ is a polynomial of order at most $n$;
\item[(ii)]   $f\in C^{n-2}[x_0,x_{k+1}]$. In the case $n = 1$, we define $C^{-1}[x_0,x_{k+1}]$ to be the space of piecewise constant functions on $[x_0,x_{k+1}]$.
\end{enumerate}
The function $f$ is called a {cardinal spline} if the knot set $X$ is a contiguous subset of $\Z$. As a spline of order $n$ contains $n(k+1)$ free parameters on $[x_0,x_{k+1}]$ and has to satisfy $n-1$ differentiability conditions at the $k$ interior knots $x_1, \ldots, x_k$, the set ${S}_{X,n}$ of all spline functions $f$ of order $n$ over the knot set $X$ forms a real vector space of dimension $n+k$. As the space of cardinal splines of order $n$ over a finite knot set $X$ is finite-dimensional, a convenient and powerful basis for ${S}_{X,n}$ is given by the family of polynomial cardinal B-splines introduced by Curry \& Schoenberg \cite{CS}. For a detailed study of splines and wavelets we refer to \cite{Ch} and for splines and fractal functions we refer to \cite{peter}.\\

The theory of $B$-splines can be extended to $\c=\R^2$ by using the tensor product of functions, namely, $\mathbb{B}_n(x,y) := B_n(x)B_n(y),\ x,y\in\R$. The $B$-splines considered in this paper are totally different. In order to define these new  \emph{twisted $B$-splines}, we make use of the concept of  twisted convolution. This is a non-standard convolution which arises while transferring the convolution from the {\it Heisenberg} group to the complex plane.\\

The {\it Heisenberg} group $\mathbb{H}$ is a nilpotent {\it Lie} group whose underlying manifold is $\R\times\R\times\R$ endowed with a group operation defined by 
\[
(x,y,t)(x^\prime,y^\prime,t^\prime):=(x+x^\prime,y+y^\prime,t+t^\prime+\tfrac{1}{2}(x^\prime y-y^\prime x)),
\]
and where the {\it Haar} measure is {\it Lebesgue} measure $dx\,dy\,dt$ on $\R^3$. By the {\it Stone--von Neumann} theorem, every infinite dimensional irreducible unitary representation on $\mathbb{H}$ is unitarily equivalent to the representation $\pi_\lambda$ given by
\begin{align*}
\pi_\lambda(x,y,t)\p(\xi)=e^{2\pi i\lambda t}e^{2\pi i\lambda(x\xi+\frac{1}{2}xy)}\p(\xi+y),
\end{align*}
for $\p\in L^2(\R)$ and $\lambda\in \R^\times:=\R\setminus\{0\}$. This representation $\pi_\lambda$ is called the {\it Schr\"{o}dinger} representation of the {\it Heisenberg} group. For $f,g\in L^1(\mathbb{H})$, the group convolution of $f$ and $g$ is defined by
\begin{align*}
f*g(x,y,t):=\int_{\mathbb{H}}f\big((x,y,t)(u,v,s)^{-1}\big)g(u,v,s)\ du\,dv\,ds.
\end{align*}
This can be reduced to $\R^2$ as a twisted convolution in the following way. For $f\in L^1(\mathbb{H})$, define
\begin{align*}
f^\lambda(x,y) :=\int_\R e^{2\pi i\lambda t}f(x,y,t)\ dt.
\end{align*}
Now for $F,G\in L^1(\R^2)$, define
\begin{align*}
F*_\lambda G(x,y) :=\iint_{\R^2}F(x-u,y-v)G(u,v)\ e^{\pi i\lambda(uy-vx)}\ du\,dv.
\end{align*}
The above operations are called $\lambda$-twisted convolutions on $\R^2$. One can show that for $f,g\in L^1(\mathbb{H})$, $(f*g)^\lambda=f^\lambda *_\lambda g^\lambda$. In particular, when $\lambda:=1$, the $\lambda$-twisted convolution is simply called the twisted convolution and denoted by $F\times G$ for $F,G\in L^1(\R^2)$.\\

Now let $\pi(x,y):=\pi_1(x,y,0)$. Then for $f\in L^1(\R^2)$, the {\it Weyl} transform of $f$ is defined to be
\begin{align*}
W(f):=\int_{\R^2}f(x,y)\pi(x,y)\ dx\,dy,
\end{align*}
with $\pi(x,y)\p(\xi)=e^{2\pi i(x\xi+\frac{1}{2}xy)}\p(\xi+y)$, $\p\in L^2(\R)$. In order to study problems concerned with the group {\it Fourier} transform on $\mathbb{H}$, an important technique is to take the partial {\it Fourier} transform in the $t$-variable and reduce the study to the case $\R^2$. Then the analysis on the {\it Heisenberg} group can be studied by first looking into an analysis based on the {\it Weyl} transform and twisted convolution. As we intend to study in the future $B$-splines on the {\it Heisenberg} group, we lay the ground work for it in this paper by studying $B$-splines on $\c$ using twisted convolution. It is important to mention here that $L^1(\R^2)$ turns out to be a non-commutative {\it Banach} algebra under twisted convolution, unlike ordinary convolution on $\R^2$. For a study of the {\it Heisenberg} group and related topics, we refer to \cite{BCT,follandphase,thangavelu}.\\

In \cite{saswatajmaa}, Radha and Adhikari studied the problem of characterizing the system of twisted translates to be a  frame sequence or a {\it Riesz} sequence. This study was later extended to the {\it Heisenberg} group in \cite{saswatahouston}. In \cite{saswatacollec}, Radha and Adhikari studied some properties of twisted translates of a square-integrable function on $\c$ and later extended those results to the {\it Heisenberg} group. Recently, in \cite{aratijmpa}, the orthonormality of a wavelet system associated with twisted translates and left translates on the {\it Heisenberg} group were investigated.\\

It is worth mentioning here that the presence of the ``twist" term in the definition of a twisted $B$-spline complicates the computations and induces an entirely different geometric structure from that of the classical $B$-splines on the real line. \\

The new class of twisted B-splines constructed in this paper have the same support as the tensor products of the classical B-splines but possess higher regularity; instead of being piecewise polynomial they are piecewise analytic. This latter property makes the twisted B-splines suitable for approximating or interpolating higher dimensional surfaces which have locally higher than polynomial regularity. As they generate nonstationary multiresolution analyses of $L^2(\R^2)$ they may also be employed for Galekin-type schemes.  \\

We organize the paper as follows. In Section $2$, we provide the necessary background for the remainder of this article. In Section $3$, we define twisted $B$-splines and study some of their properties. In Section $4$, we investigate the system of twisted translates of the twisted $B$-splines and prove that this collection is a {\it Riesz} basis for the second order $B$-spline. In Section $5$, we introduce the notion of {\it multiresolution analysis} on $L^2(\R^2)$ using twisted translations and standard dilations. However, we find that even in the simplest case of the first order twisted $B$-spline $\p_1$ this notion does not help to obtain a wavelet basis for $L^2(\R^2)$.  We need to employ the concept of \textit{nonstationary  multiresolution analysis} and use ``certain" twisted translations and dilations. This is illustrated in Example \ref{p}.\\

\section{Notation and Background}

Let $0\neq\h$ be a separable Hilbert space.
\begin{definition}
A sequence $\{f_k:~k\in\N\}$ of elements in $\h$ is said to be a {\it Bessel} sequence for $\h$ if there exists a constant $B>0$ satisfying
\begin{align*}
\sum_{k\in\N}\mid\langle f,f_k\rangle\mid^2\hspace{1 mm}\leq B\|f\|^2,\hspace{.75 cm}\forall\ f\in\h.
\end{align*}
\end{definition}
\begin{definition}
A sequence of the form $\{Ue_k:~k\in\N\}$, where $\{e_k:~k\in\N\}$ is an orthonormal basis of $\h$ and $U$ is a bounded invertible operator on $\h$, is called a Riesz basis. If $\{f_k:~k\in\N\}$ is a Riesz basis for $\overline{\Span\{f_k:~k\in\N\}}$, then it is called a Riesz sequence. 
\end{definition}
Equivalently, $\{f_k:~k\in\N\}$ is said to be a Riesz sequence if there exist constants $A,B>0$ such that
\begin{align*}
A\|\{c_{k}\}\|_{\ell^2(\N)}^2\leq\bigg\|\sum_{k\in\N}c_kf_k\bigg\|^2\leq B\|\{c_{k}\}\|_{\ell^2(\N)}^2,
\end{align*}
for all finite sequences $\{c_{k}\}\subset\ell^2(\N)$.
\begin{definition}
The {\it Gramian} $G$ associated with a {\it Bessel} sequence $\{f_k:~k\in\N\}$ is a bounded operator on $\ell^2(\N)$ defined by
\begin{align*}
G\{c_{k}\}:=\bigg\{\sum_{k\in\N}\big\langle f_k,f_j\big\rangle c_k\bigg\}_{j\in\N}.
\end{align*}
\end{definition}
It is well known that $\{f_k:~k\in\N\}$ is a {\it Riesz} sequence iff
\begin{align*}
A\|\{c_{k}\}\|_{\ell^2(\N)}^2\leq\big\langle G\{c_{k}\},\{c_{k}\}\big\rangle\leq B\|\{c_{k}\}\|_{\ell^2(\N)}^2
\end{align*}
for constants $A, B > 0$.
\begin{definition}
A closed subspace $V\subset L^2(\R)$ is called a shift-invariant space if $f\in V\Rightarrow T_kf\in V$ for any $k\in\z$, where $T_k$ denotes the translation operator $T_kf(y):=f(y-k)$. In particular, if $\p\in L^2(\R)$ then $V(\p)=\overline{\Span\ \{T_k\p:k\in\z\}}$ is called a principal shift-invariant space.
\end{definition}
For a detailed study of {\it Riesz} bases on $\mathcal{H}$ and shift-invariant spaces on $L^2(\R)$, we refer to \cite{C}.
\begin{definition}
Let $\psi\in L^2(\R)$ and $j,k\in\z$. Define $\psi_{j,k}(x):=2^{j/2}\psi(2^jx-k),\ x\in\R$. The latter can also be written as
\begin{align*}
\psi_{j,k}=D_{2^j}T_k\psi\ \ \ j,k\in\z,
\end{align*}
where $D_{2^j}f(x):=2^{j/2}f\big(2^{j}x\big)$ and $T_kf(x):=f(x-k)$, $f\in L^2(\R)$, are unitary operators on $L^2(\R)$. If $\{\psi_{j,k}:j,k\in\z\}$ is an orthonormal basis for $L^2(\R)$ then the function $\psi$ is called a wavelet or mother wavelet and $\{\psi_{j,k}:j,k\in\z\}$ is called a wavelet basis for $L^2(\R)$.   
\end{definition}
The following definition is useful in constructing wavelets for $L^2(\R)$. This notion was first introduced by {\it Meyer} in \cite{meyer} and independently studied and developed by {\it Mallat} in \cite{mallat}.
\begin{definition}
A sequence $\{V_j\}_{j\in\z}$ of closed subspaces of $L^2(\R)$ is said to form a {\it multiresolution analysis} for $L^2(\R)$ if the following conditions hold:
\begin{enumerate}[(i)]
\item $V_j\subset V_{j+1},\ \forall\ j\in\z$.
\item $\overline{\bigcup\limits_{j\in\z}V_j}=L^2(\R)$ and $\bigcap\limits_{j\in\z} V_j=\{0\}$.
\item $f\in V_j\;\Longleftrightarrow\; f(2\,\cdot\,)\in V_{j+1},\ \forall\ j\in\z$.
\item There exists $\p\in V_0$ such that $\{T_k\p:k\in\z\}$ is an orthonormal basis for $V_0$.
\end{enumerate}
The function $\p$ is called a scaling function of the given {\it multiresolution analysis}.  
\end{definition}
\begin{example}
Let $\p:=\chi_{[0,1]}$, where $\chi_{[0,1]}$ is the characteristic function of $[0,1]$. Let $V_0=\overline{\Span\ \{T_k\p:k\in\z\}}$. Define
\begin{align*}
\psi(x)&:=\begin{cases}
1,& 0\leq x<\frac{1}{2};\\
-1,& \frac{1}{2}\leq x<1;\\
0,& \text{otherwise}.
\end{cases}
\end{align*}
Then $\psi$ is called the {\it Haar} wavelet.
\end{example}
For a study of wavelets we refer to, e.g., \cite{daubechies}.
{\begin{definition}
A \textit{nonstationary multiresolution analysis} of $L^2(\R)$ is a sequence of closed subspaces $\{V_j:j\in\z\}$ of $L^2(\R)$ such that
\begin{enumerate}[(i)]
\item $V_j\subset V_{j+1},\ \forall\ j\in\z$.
\item $\overline{\bigcup\limits_{j\in\z}V_j}=L^2(\R)$ and $\bigcap\limits_{j\in\z} V_j=\{0\}$.
\item For $j\in\z$, there exists $\Phi_j\in V_j$ such that $\{2^{j/2}\Phi_j(2^jx-k):k\in\z\}$ is a {\it Riesz} basis for $V_j$.
\end{enumerate} 
\end{definition}
Then as in the case of usual {\it multiresolution analysis}, by taking $V_0\oplus W_0=V_1$ and $V_j\oplus W_j=V_{j+1}$, one arrives at a sequence $\{\Psi_j:j\in\z\}$ such that $\{2^{j/2}\Psi_j(2^jx-k):k\in\z\}$ is a {\it Riesz} basis for $W_j$, which in turn leads to the \textit{Riesz} basis for $L^2(\R)$. We refer to \cite{boor, bastin, blu} in this connection.}
\begin{definition}
Let $\chi_{[0,1]}$ denote the characteristic function of $[0,1]$. For $n\in \N$, set
\begin{align*}
&B_{1}:[0,1]\to [0,1], \quad x\mapsto \chi_{[0,1]}(x);\\
&B_{n} := B_{n-1}\ast B_{1}, \quad 1 < n\in \N.\numberthis \label{15}
\end{align*}
An element of the discrete family $\{B_n\}_{n\in \N}$ is called a \emph{(cardinal) polynomial B-spline of order $n$}.
\end{definition}

\begin{remark}
The adjective ``cardinal'' refers to the fact that $B_n$ is defined over the set of knots $\{0,1, \ldots, n\}$.
\end{remark}

Using this definition, it can be shown that $B_n$ has a closed representation of the form
\begin{align*}
B_n (x)  = \frac{1}{\Gamma (n)}\,\sum_{k=0}^{n} (-1)^k \binom{n}{k} (x-k)_+^{n-1},
\end{align*}
where $\Gamma$ denotes the Euler Gamma function and $x_+^p := \max\{0,x\}^p$ a truncated power function.

Now \eqref{15} implies that the Fourier representation of $B_n$ is given by
\[
\widehat{B}_{n}(\omega) := \mathcal{F}(B_n)(\omega)  := \int_\R B_n(x) e^{- i\omega x} dx = \left(\frac{1-e^{- i\omega}}{i\omega} \right)^{n}.
\]
Next, we summarize some of the properties of the cardinal B-splines $B_n$ that are relevant for the remainder of this paper.
\begin{enumerate}[(i)]
\item $\supp B_n = [0,n]$. 
\item $B_n > 0$ on $(0,n)$.
\item Partition of Unity: $\sum\limits_{k\in \Z} B_n(x-k) = 1$, for all $x\in \R$.
\item The collection $\{B_n : n\in \N\}$ constitutes a family of piecewise polynomial functions with $B_n\in C^{n-2} [0,n]$, $n\in \N$. In the case $n=1$, we define $C^{-1}[0,1]$ as the family of piecewise constant functions on $[0,1]$.
\end{enumerate}
A particular property of B-splines which we consider in the setting of the present paper is the following.

\begin{theorem}\cite[Theorem 9.2.6]{C}
For each $n\in \N$, the sequence $\{T_kB_n\}_{k\in \z}$ is a Riesz sequence.
\end{theorem}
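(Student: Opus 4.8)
The plan is to use the standard Fourier-analytic characterization of Riesz sequences for systems of integer translates. Recall that $\{T_k\varphi\}_{k\in\z}$ is a Riesz sequence in $L^2(\R)$ if and only if there exist constants $A,B>0$ such that
\[
A\le \sum_{k\in\z}\abs{\widehat{\varphi}(\omega+2\pi k)}^2\le B\qquad\text{for a.e. }\omega\in\R,
\]
i.e. the periodization $\Phi(\omega):=\sum_{k\in\z}\abs{\widehat{\varphi}(\omega+2\pi k)}^2$ of $\abs{\widehat\varphi}^2$ is essentially bounded above and below. So the task reduces to establishing these two bounds for $\varphi=B_n$.

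First I would record that $B_n\in L^1(\R)\cap L^2(\R)$ with compact support $[0,n]$, so $\widehat{B}_n$ is a bounded continuous function and the periodization $\Phi_n$ is a bounded, continuous, $2\pi$-periodic function; this immediately gives the upper bound $B$. For the lower bound, the key step is to show $\Phi_n(\omega)>0$ for every $\omega$; since $\Phi_n$ is continuous and $2\pi$-periodic, positivity everywhere combined with compactness of $[0,2\pi]$ yields a uniform lower bound $A=\min_{\omega}\Phi_n(\omega)>0$. Thus the whole proof hinges on the non-vanishing of $\Phi_n$.

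To prove $\Phi_n(\omega)>0$ for all $\omega$, I would use the explicit formula $\widehat{B}_n(\omega)=\left(\frac{1-e^{-i\omega}}{i\omega}\right)^n$. The zeros of $\widehat{B}_n$ are exactly the points $\omega=2\pi m$ with $m\in\z\setminus\{0\}$ (each a zero of order $n$), together with the removable behavior at $\omega=0$ where $\widehat{B}_n(0)=1\ne 0$. Hence for any fixed $\omega$, at most one term in the sum $\sum_{k\in\z}\abs{\widehat{B}_n(\omega+2\pi k)}^2$ can vanish — namely when $\omega+2\pi k\in 2\pi\z$ for some $k$, which forces $\omega\in 2\pi\z$, and then the term with $\omega+2\pi k=0$ contributes $\abs{\widehat{B}_n(0)}^2=1$. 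Therefore $\Phi_n(\omega)\ge 1$ whenever $\omega\in 2\pi\z$, and for $\omega\notin 2\pi\z$ every term $\abs{\widehat{B}_n(\omega+2\pi k)}^2$ with, say, $k$ chosen so that $\omega+2\pi k\in(0,2\pi)$ is strictly positive. Either way $\Phi_n(\omega)>0$.

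The only genuine obstacle is verifying the claimed location and order of the zeros of $\widehat{B}_n$, i.e. that $1-e^{-i\omega}$ vanishes precisely on $2\pi\z$ and that the factor $1/\omega$ does not introduce a pole at $\omega=0$ because it is cancelled by the numerator — this is the classical $\sinc$-type computation and is routine. An alternative, slightly cleaner route avoiding any discussion of orders of zeros is to invoke that $B_n=B_1*\cdots*B_1$ ($n$-fold), note $\widehat{B}_n=\widehat{B}_1^{\,n}$, and reduce to showing $\sum_k\abs{\widehat{B}_1(\omega+2\pi k)}^2$-type positivity; but since we want it for $B_n$ directly, I would simply argue as above that for each $\omega$ there is at least one $k$ with $\omega+2\pi k\notin 2\pi\z$ (in fact with $\omega + 2\pi k\in(0,2\pi)$ unless $\omega\in 2\pi\z$, in which case the $k$ giving $0$ already works), making $\Phi_n(\omega)$ a sum of nonnegative terms not all zero. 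Continuity and periodicity of $\Phi_n$ then upgrade pointwise positivity to the uniform lower bound, completing the proof.
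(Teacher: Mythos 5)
Your proposal is correct and follows essentially the same route as the paper's source for this statement: the paper simply cites \cite[Theorem 9.2.6]{C} and, in Section 4, itself points to the standard criterion that $\{T_kB_n\}_{k\in\z}$ is a Riesz sequence iff the periodization of $\abs{\widehat{B}_n}^2$ is bounded above and below a.e., which combined with the explicit formula $\widehat{B}_n(\omega)=\big(\tfrac{1-e^{-i\omega}}{i\omega}\big)^n$ (zeros exactly at $2\pi\z\setminus\{0\}$, value $1$ at $0$) is exactly your argument. The only step worth making explicit is the upper bound: boundedness of $\widehat{B}_n$ alone does not bound the infinite sum, but either the decay $\abs{\widehat{B}_n(\omega)}\le\min\{1,(2/\abs{\omega})^{n}\}$ or the observation that the periodization of a compactly supported $L^2$ function is a trigonometric polynomial (its Fourier coefficients are the finitely many nonzero autocorrelation samples $\langle B_n,T_mB_n\rangle$) settles it immediately.
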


In addition, polynomial (cardinal) B-splines satisfy a two-scale refinement equation of the form
\[
B_n (x) = \sum_{k=0}^n \frac{1}{2^{m-1}}\binom{n}{k} B_n(2x-k),
\]
and can be used to define certain classes of wavelets. We observe that when $B_1=\x_{[0,1]}$, the corresponding wavelet is the {\it Haar} wavelet. For more details, we refer to, for instance, \cite{Ch}.\\

The {\it Weyl} transform, defined in the Introduction, maps $L^1(\R^2)$ into the space of bounded operators on $L^2(\R)$, denoted by $\mathcal{B}(L^2(\R))$. Moreover, the {\it Weyl} transform $W(f)$ is an integral operator whose kernel $K_f(\xi,\eta)$ is given by
\begin{align*}
K_f(\xi,\eta)=\int_\R f(x,\eta-\xi)e^{\pi ix(\xi+\eta)}\ dx.
\end{align*}
The map $W$ can be uniquely extended to a bijection from the class of tempered distributions $S^\prime(\R^2)$ onto the space of continuous linear maps from $S(\R)$ into $S^\prime(\R)$. Here, $S(\R)$ denotes the Schwarz space of real-valued functions on $\R$.\\

When $f\in L^2(\R^2)$, $W(f)$ turns out to be a {Hilbert-Schmidt} operator on $L^2(\R)$. Furthermore, $W(f)$ enjoys the properties reminiscent of those of the {\it Fourier} transform on $\R$. For instance, we have the {\it Plancherel} formula 
\[
\|W(f)\|_\b=\|f\|_{L^2(\R^2)},
\]
where $\b:=\b(L^2(\R))$ is the space of {Hilbert-Schmidt} operators on $L^2(\R)$. Moreover,
\begin{align*}
\big\langle W(f),W(g)\big\rangle_\b=\big\langle f,g\big\rangle_{L^2(\R^2)}=\big\langle K_f,K_g\big\rangle_{L^2(\R^2)}.
\end{align*}
The inversion formula is given by
\begin{align*}
f(x,y)=\tr(\pi(x,y)^*W(f)),
\end{align*}
where $\tr$ denotes the trace, and, in addition, 
\[
W(f\times g)=W(f)W(g), \quad\text{for $f,g\in L^2(\R^2)$.}
\]    
\begin{definition}
Let $\phi\in L^2(\R^2)$. For $(k,l)\in\z^2$, the twisted translation of $\phi$, denoted by $\t\phi$, is defined to be
\begin{align}\label{11}
\t\phi(x,y):=e^{\pi i(lx-ky)}\phi(x-k,y-l),\ \ \ (x,y)\in\R^2.
\end{align}
\end{definition}
\begin{definition}
The twisted shift-invariant space of $\phi$, denoted by $V^t(\phi)$, and defined by
\begin{align*}
V^t(\phi):=\overline{\Span\{\t\phi:k,l\in\z\}}
\end{align*}
is a closed subspace of $L^2(\R^2)$.
\end{definition}
In order to give a reader a feeling for twisted translation, we provide some properties which were proved in \cite{saswatajmaa}.
\begin{enumerate}[(i)]
\item The adjoint $(\t)^*$ of $\t$ is $T_{(-k,-l)}^t$.
\item $T_{(k_1,l_1)}^tT_{(k_2,l_2)}^t=e^{-\pi i(k_1l_2-l_1k_2)}T_{(k_1+k_2,l_1+l_2)}^t$.
\item $\t$ is an unitary operator on $L^2(\R^2)$, for each $(k,l)\in\z^2$.
\item  The {\it Weyl} transform of $\t\p$ is given by $W(\t\p)=\pi(k,l)W(\p)$.
\end{enumerate}
\section{Definition and Various Properties of Twisted B-splines}
\begin{definition}
Let $\phi_1(x,y):=\x_{[0,1)}(x)\x_{[0,1)}(y)$. Define
\begin{align*}
\phi_n= \underset{i = 1}{\overset{n-1}{\times}} \phi_i := \phi_1\times\phi_1\times\cdots\times\phi_1,\quad(\text{$(n-1)$--fold twisted convolution}).
\end{align*}
Then $\p_n$ is called an $n$-th order twisted B-spline. 
\end{definition}
Using this definition, we can derive an explicit formula for $\phi_2(x,y)$.
\begin{align*}
\phi_2(x,y)&=\p_1\times\p_1(x,y)\\
&=\iint_{\R^2}\p_1(x-u,y-v)\p_1(u,v)e^{\pi i(uy-vx)}\ du\,dv\\
&=\iint_{[0,1)\times [0,1)}\x_{[0,1)}(x-u)\x_{[0,1)}(y-v)e^{\pi i(uy-vx)}\ du\,dv\\
&=\bigg(\int_0^1\x_{[0,1)}(x-u)e^{\pi iuy}\ du\bigg)\bigg(\int_0^1\x_{[0,1)}(y-v)e^{-\pi ivx}\ dv\bigg)\\
&=\bigg(\int_{u\in[0,1)\cap(x-1,x]}e^{\pi iuy}\ du\bigg)\bigg(\int_{v\in[0,1)\cap(y-1,y]}e^{-\pi ivx}\ dv\bigg).
\end{align*}
Then a straight forward computation leads to the following:
\begin{align*}
\p_2(x,y)&=\begin{cases}
\frac{1}{\pi^2xy}(e^{\pi ixy}-1)(e^{-\pi ixy}-1),& (x,y)\in(0,1]\times(0,1];\\
\frac{1}{\pi^2xy}(e^{\pi ixy}-1)(e^{-\pi ix}-e^{-\pi ix(y-1)}), & (x,y)\in(0,1]\times(1,2];\\
\frac{1}{\pi^2xy}(e^{\pi iy}-e^{\pi i(x-1)y})(e^{-\pi ixy}-1), & (x,y)\in(1,2]\times(0,1];\\
\frac{1}{\pi^2xy}(e^{\pi iy}-e^{\pi i(x-1)y})(e^{-\pi ix}-e^{-\pi ix(y-1)}), & (x,y)\in(1,2)\times(1,2);\\
0, & \text{otherwise}.
\end{cases}
\end{align*}
Using trigonometric identities, $\p_2$ can be written in the form
\begin{align}\label{1}
\p_2(x,y)&=\begin{cases}
\frac{2}{\pi^2xy}\big(1-\cos(\pi xy)\big), & (x,y)\in(0,1]\times(0,1];\\
\frac{2}{\pi^2xy}\big(\cos(\pi x(y-1))-\cos(\pi x)\big), & (x,y)\in(0,1]\times(1,2];\\
\frac{2}{\pi^2xy}\big(\cos(\pi (x-1)y)-\cos(\pi y)\big), & (x,y)\in(1,2]\times(0,1];\\
\frac{2}{\pi^2xy}\big(\cos(\pi (x-y))-\cos(\pi (x+y-xy))\big), & (x,y)\in(1,2)\times(1,2);\\
0, & \text{otherwise}.
\end{cases}
\end{align}
Figure \ref{fig1} displays the graphs of the classical second order tensor product B-spline $\mathbb{B}_2 = B_2\otimes B_2$ and the second order twisted B-spline $\p_2$.

\begin{figure}[h!]
\begin{center}
\includegraphics[width=7cm, height= 5cm]{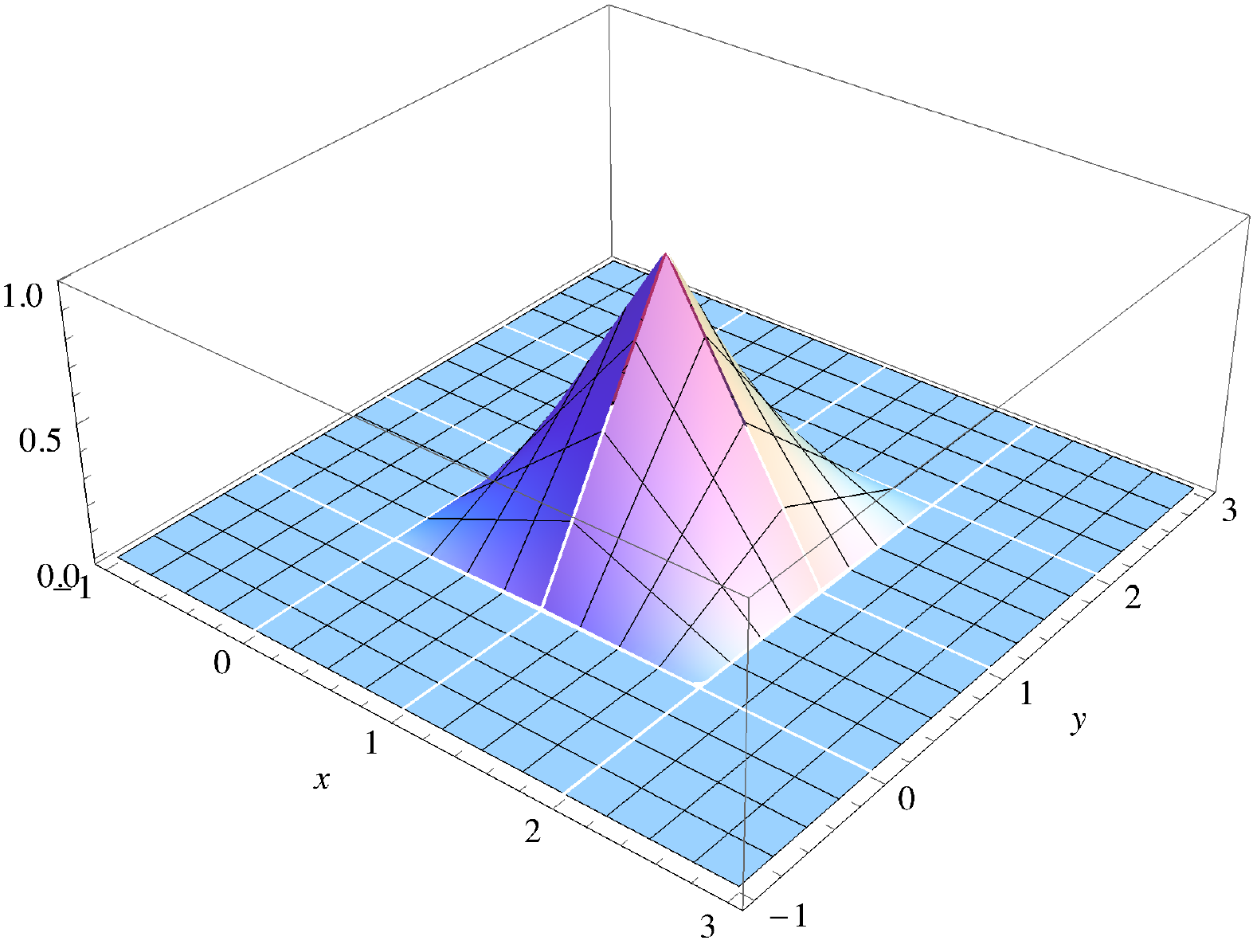}\qquad\includegraphics[width=7cm, height= 5cm]{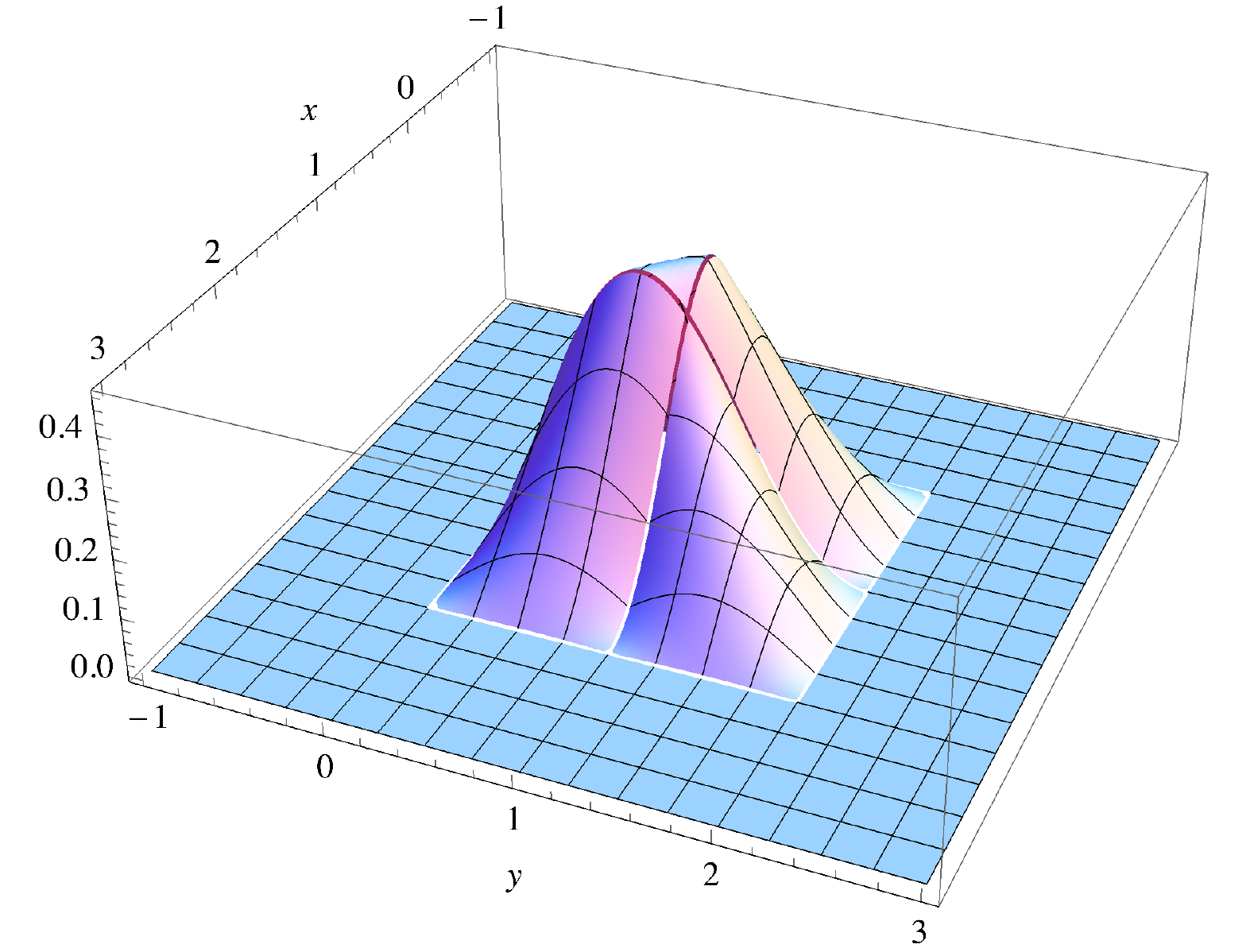}
\caption{The classical B-spline $\mathbb{B}_2$ (left) and twisted B-spline $\phi_2$  (right).}\label{fig1}
\end{center}
\end{figure}

We note that
\begin{align*}
\p_{n+1}(x,y)&=\p_n\times\p_1(x,y)\\
&=\iint_{\R^2}\p_n(x-u,y-v)\x_{[0,1)}(u)\x_{[0,1)}(v)e^{\pi i(uy-vx)}\ du\,dv.
\end{align*}
Thus
\begin{align}\label{2}
\p_{n+1}(x,y)=\int_{0}^1\int_{0}^1\p_n(x-u,y-v)e^{\pi i(uy-vx)}\ dv\,du.
\end{align}
\begin{proposition}
$\supp\phi_n=[0,n]\times[0,n],\ \forall\ n\in\N$.
\end{proposition}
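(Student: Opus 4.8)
\emph{Strategy.} The plan is to argue by induction on $n$, using the recursion \eqref{2} throughout. For $n=1$ the claim is immediate, since $\supp\phi_1=\overline{[0,1)\times[0,1)}=[0,1]^2$. Assume now that $\supp\phi_n=[0,n]^2$. The inclusion $\supp\phi_{n+1}\subseteq[0,n+1]^2$ is the easy half: in
\[
\phi_{n+1}(x,y)=\int_{0}^1\int_{0}^1\phi_n(x-u,y-v)\,e^{\pi i(uy-vx)}\,dv\,du
\]
the factor $e^{\pi i(uy-vx)}$ has modulus one, so if $\phi_{n+1}(x,y)\neq0$ there must exist $(u,v)\in[0,1]^2$ with $(x-u,y-v)\in\supp\phi_n=[0,n]^2$, forcing $(x,y)\in[0,n]^2+[0,1]^2=[0,n+1]^2$. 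As this set is closed, $\supp\phi_{n+1}\subseteq[0,n+1]^2$.

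For the reverse inclusion, since $\supp\phi_{n+1}$ is closed and $[0,n+1]^2$ is the closure of the union of the open unit cells $Q_{j,k}=(j,j+1)\times(k,k+1)$, $0\le j,k\le n$, it suffices to show that $\{\phi_{n+1}\neq0\}$ is dense in every $Q_{j,k}$. I would proceed in three steps. First, the substitution $s=x-u$, $t=y-v$ turns \eqref{2} into
\[
\phi_{n+1}(x,y)=\int_{x-1}^{x}\int_{y-1}^{y}\phi_n(s,t)\,e^{\pi i(tx-sy)}\,dt\,ds,
\]
and one shows, by induction, that the restriction of $\phi_{n+1}$ to each $Q_{j,k}$ coincides with a real-analytic (in fact entire) function of $(x,y)$: on a fixed cell the rectangle of integration meets a fixed set of cells of $\phi_n$, the endpoints of the resulting pieces depend affinely on $(x,y)$, and $\phi_n$ is analytic on each piece, so differentiation under the integral sign applies. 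Hence on each $Q_{j,k}$ either $\phi_{n+1}\equiv0$ or its zero set has empty interior. Second, one records that every $\phi_n$ is real-valued --- this follows inductively from the identity $\overline{F\times G}=\overline{G}\times\overline{F}$ for the twisted convolution --- and that $\phi_{n+1}>0$ near the origin: for $(x,y)$ close to $(0,0)$ the domain above shrinks to $[0,x]\times[0,y]$, on which $\phi_n\ge0$ by a correspondingly strengthened induction hypothesis, while $\cos(\pi(tx-sy))$ stays positive there; thus $\phi_{n+1}\not\equiv0$ on $Q_{0,0}$. Third, one propagates non-triviality from $Q_{0,0}$ to the remaining cells by peeling off one integer strip at a time in the recursion, and combines this with Step 1 to conclude that $\{\phi_{n+1}\neq0\}$ is dense in each $Q_{j,k}$, hence in $[0,n+1]^2$.

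The step I expect to be the main obstacle is the third one: ruling out that $\phi_{n+1}$ vanishes identically on a cell away from the origin. In the classical setting the positivity of $B_{n+1}$ on $(0,n+1)$ is inherited from that of $B_n$ because the convolution kernel is nonnegative; here the oscillatory factor $e^{\pi i(tx-sy)}$ destroys this monotonicity, so the exclusion has to be argued from the analytic structure of the integrand together with the explicit geometry of the moving rectangles $[x-1,x]\times[y-1,y]$, rather than from a soft positivity argument. Everything else --- the two inclusions modulo this point, the piecewise analyticity, and the corner positivity --- is routine.
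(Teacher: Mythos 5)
Your argument for the inclusion $\supp\phi_{n+1}\subseteq[0,n+1]\times[0,n+1]$ is correct and is essentially the paper's own computation: both use the recursion \eqref{2} together with the induction hypothesis to see that the integrand vanishes unless $(x,y)\in(0,n+1)\times(0,n+1)$. The auxiliary facts you set up are also sound: the base case, the piecewise analyticity of $\phi_{n+1}$ on the open unit cells, the realness of $\phi_n$ via $\overline{F\times G}=\overline{G}\times\overline{F}$, and the strict positivity near the origin, which indeed yields $\overline{Q_{0,0}}\subseteq\supp\phi_{n+1}$.

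The genuine gap is exactly the one you flag yourself: Step 3 is a plan, not a proof. To obtain the equality $\supp\phi_{n+1}=[0,n+1]\times[0,n+1]$ you must rule out that $\phi_{n+1}$ vanishes identically on some cell $Q_{j,k}$ with $(j,k)\neq(0,0)$, and ``peeling off one integer strip at a time'' is never made precise; as you observe, the oscillatory factor $e^{\pi i(tx-sy)}$ destroys the positivity propagation that carries the classical B-spline argument, and no substitute mechanism is supplied. So, as written, your proposal establishes only the containment $\supp\phi_{n+1}\subseteq[0,n+1]^2$ plus non-triviality on the corner cell. For comparison, the paper's inductive step has the same shape: it too only verifies the containment (the reverse inclusion is read off explicitly only for $n=2$ from the closed formula \eqref{1}) and then asserts equality. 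Your diagnosis of where the real difficulty lies is therefore accurate and more candid than the paper's treatment, but the difficulty itself remains unresolved in your proposal, so the statement as claimed is not fully proved.
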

\begin{proof}
By \eqref{1}, we observe that $\supp\p_2=[0,2]\times[0,2]$. Assume that $\supp\phi_{n}=[0,n]\times[0,n]$. Now,
\begin{align*}
\phi_{n+1}(x,y)=\int_{u\in[x-1,x]\cap[0,n]}\int_{v\in[y-1,y]\cap[0,n]}\phi_n(u,v)\ e^{\pi i(vx-uy)}\ du\,dv,
\end{align*}
which means $0< x< n+1,\ 0<y<n+1$. Thus if $(x,y)\notin(0,n+1)\times(0,n+1)$, then it follows that $\phi_{n+1}(x,y)=0$ showing that $\supp \phi_{n+1}=[0,n+1]\times[0,n+1]$.
\end{proof}
\begin{proposition}
We have
\begin{align*}
\iint_{\R^2}\phi_1(x,y)\ dxdy=1
\end{align*}
and
\begin{align*}
\iint_{\R^2}\phi_2(x,y)\ dxdy=\frac{1}{\pi^2}\big(-\gamma-\log\pi+\Ci(\pi)+i\Si(\pi)\big)\big(-\gamma-\log\pi+\Ci(\pi)-i\Si(\pi)\big),
\end{align*}
where $\gamma$ denotes the Euler--Mascheroni constant, and 
\[
\Ci (z) := \int_0^z \frac{\sin t}{t}\,dt,\quad \Si (z) :=  \int_0^z \frac{\cos t}{t}\,dt, \quad |\arg z| < \pi,
\] 
the {\it cosine} and {\it sine} integrals, respectively \cite{leb}. 

For $n\geq 1$,
\begin{align*}
\iint_{\R^2}\phi_{n+1}(x,y)\ dxdy=\iint_QL_n(u,v)\ du\,dv,
\end{align*}
where $Q=[0,1]\times[0,1]$ and $L_n$ is evaluated recursively as follows:
\begin{align*}
L_{n+1}(u,v)=\iint_QL_n(p+u,q+v)e^{\pi i(uq-vp)}\ dp\,dq,
\end{align*}
with
\begin{align*}
L_1(u,v)=\frac{1}{\pi^2uv}(e^{\pi iu}-1)(e^{-\pi iv}-1).
\end{align*}
\end{proposition}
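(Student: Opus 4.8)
The plan is to handle the three assertions in turn, the recursion carrying most of the weight. The first is immediate, since $\phi_1=\x_{[0,1)}\otimes\x_{[0,1)}$ is a product of indicator functions, so $\iint_{\R^2}\phi_1=\big(\int_0^1 dx\big)\big(\int_0^1 dy\big)=1$. For the remaining two, introduce the auxiliary ``partial twisted Fourier transform''
\[
\widetilde L_n(u,v):=\iint_{\R^2}\phi_n(x,y)\,e^{\pi i(uy-vx)}\,dx\,dy .
\]
First I would record that each $\phi_n\in L^1(\R^2)$: by the previous proposition $\supp\phi_n=[0,n]\times[0,n]$, while \eqref{2} gives $|\phi_{n+1}(x,y)|\le\int_0^1\!\int_0^1|\phi_n(x-u,y-v)|\,dv\,du\le\|\phi_n\|_\infty$, so inductively $\|\phi_n\|_\infty\le\|\phi_1\|_\infty=1$; being bounded with compact support, $\phi_n$ is integrable, which legitimises every use of Fubini below. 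Substituting \eqref{2} into $\iint_{\R^2}\phi_{n+1}$, interchanging the order of integration, and translating $x\mapsto x+u$, $y\mapsto y+v$ in the inner integral (the factor $e^{\pi i(uy-vx)}$ becomes $e^{\pi i(uq-vp)}$ once the $uv$ terms cancel) yields $\iint_{\R^2}\phi_{n+1}(x,y)\,dx\,dy=\iint_Q\widetilde L_n(u,v)\,du\,dv$. Hence it suffices to show $\widetilde L_n=L_n$, i.e.\ that $\widetilde L_n$ satisfies the stated recursion with the stated initial value.

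The base case is a direct computation: $\widetilde L_1(u,v)=\big(\int_0^1 e^{-\pi i vx}\,dx\big)\big(\int_0^1 e^{\pi i uy}\,dy\big)=\frac{1}{\pi^2uv}(e^{\pi iu}-1)(e^{-\pi iv}-1)=L_1(u,v)$. For the inductive step I would insert \eqref{2} into $\widetilde L_{n+1}(u,v)=\iint_{\R^2}\phi_{n+1}(x,y)e^{\pi i(uy-vx)}\,dx\,dy$, apply Fubini, and substitute $x=p+s$, $y=q+t$, where $(s,t)\in Q$ are the twisted--convolution variables. The quadratic exponent $(s+u)(q+t)-(t+v)(p+s)$ expands, the $st$ cross terms cancel, and what remains splits as $(uq-vp)+(sq-tp)+(ut-vs)$; pulling the $(s,t)$--dependent factor out of the $(p,q)$--integral leaves $\iint_{\R^2}\phi_n(p,q)e^{\pi i((u+s)q-(v+t)p)}\,dp\,dq=\widetilde L_n(u+s,v+t)$, so that $\widetilde L_{n+1}(u,v)=\int_0^1\!\int_0^1\widetilde L_n(u+s,v+t)\,e^{\pi i(ut-vs)}\,dt\,ds$. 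Renaming $(s,t)\to(p,q)$ this is exactly the recursion in the statement, and induction gives $\widetilde L_n=L_n$ for all $n$, which proves the third assertion.

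The second assertion is then the case $n=1$ of the third: $\iint_{\R^2}\phi_2=\iint_QL_1(u,v)\,du\,dv=\frac{1}{\pi^2}\big(\int_0^1\frac{e^{\pi iu}-1}{u}\,du\big)\big(\int_0^1\frac{e^{-\pi iv}-1}{v}\,dv\big)$, the factorisation being valid because $(e^{\pi iu}-1)/u$ extends continuously to $u=0$. Writing $t=\pi u$ and separating real and imaginary parts, $\int_0^1\frac{e^{\pi iu}-1}{u}\,du=\int_0^\pi\frac{\cos t-1}{t}\,dt+i\int_0^\pi\frac{\sin t}{t}\,dt=\big(\Ci(\pi)-\gamma-\log\pi\big)+i\,\Si(\pi)$, using $\Si(\pi)=\int_0^\pi\frac{\sin t}{t}\,dt$ together with the classical identity $\Ci(\pi)=\gamma+\log\pi+\int_0^\pi\frac{\cos t-1}{t}\,dt$; the other factor is its complex conjugate, and multiplying by $\pi^{-2}$ gives the stated product.

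The argument is elementary throughout. The only points needing care are the $L^1$/Fubini justification, handled by the uniform bound $\|\phi_n\|_\infty\le1$, and the bookkeeping of the quadratic phase under the translations --- in particular the cancellation of the $st$ cross term, without which the three--factor split of the exponential, and hence the clean recursion, would not appear.
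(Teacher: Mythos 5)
Your proposal is correct and follows essentially the same route as the paper: both define the auxiliary function $L_n(u,v)=\iint_{\R^2}\phi_n(r,s)e^{\pi i(us-vr)}\,dr\,ds$, derive the recursion by inserting \eqref{2}, applying Fubini and translating variables (with the same cancellation of the cross term in the quadratic phase), and obtain the $\phi_2$ value by factoring $\iint_Q L_1$ into two one-dimensional integrals expressed via $\Ci$, $\Si$, $\gamma$ and $\log\pi$. The only differences are cosmetic — you treat $n=2$ as the $n=1$ instance of the recursion rather than as a separate computation, and you supply the integrability/Fubini justification ($\|\phi_n\|_\infty\le 1$ with compact support) that the paper leaves implicit.
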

\begin{proof}
For $n=1$, we obtain
\begin{align*}
\iint_{\R^2}\phi_1(x,y)\ dxdy&=\iint_{\R^2}\x_{[0,1)}(x)\x_{[0,1)}(y)\ dx\,dy =1.
\end{align*}
For $n=2$, making use of \eqref{2} and Fubini's theorem, we get
\begin{align*}
\iint_{\R^2}\phi_2(x,y)\ dxdy&=\iint_{\R^2}\bigg(\int_{0}^1\int_{0}^1\p_1(x-u,y-v)e^{\pi i(uy-vx)}\ dv\,du\bigg)dx\,dy\\
&=\iint_{Q}\bigg(\iint_{\R^2}\p_1(x-u,y-v)e^{\pi i(uy-vx)}\ dx\,dy\bigg)du\,dv\\
&=\iint_Q\bigg(\iint_Qe^{\pi i(us-vr)}\ dr\,ds\bigg)du\,dv,
\end{align*}
where we applied the change of variables $x-u=r$ and $y-v=s$. However,
\begin{align*}
\iint_Qe^{\pi i(us-vr)}\ dr\,ds&=\frac{1}{\pi^2uv}(e^{\pi iu}-1)(e^{-\pi iv}-1).
\end{align*}
Thus
\begin{align*}
\iint_{\R^2}\phi_2(x,y)\ dx\,dy&=\frac{1}{\pi^2}\bigg(\int_0^1\frac{e^{\pi iu}-1}{u}\ du\bigg)\bigg(\int_0^1\frac{e^{-\pi iv}-1}{v}\ dv\bigg)\\
&=\frac{1}{\pi^2}\big(-\gamma-\log\pi+\Ci(\pi)+i\Si(\pi)\big)\big(-\gamma-\log\pi+\Ci(\pi)-i\Si(\pi)\big).
\end{align*}
Now, again by using \eqref{2},
\begin{align*}
\iint_{\R^2}\phi_{n+1}(x,y)\ dx\,dy&=\iint_{\R^2}\bigg(\int_{0}^1\int_{0}^1\p_n(x-u,y-v)e^{\pi i(uy-vx)}\ dv\,du\bigg)dx\,dy\\
&=\iint_{Q}\bigg(\iint_{\R^2}\p_n(x-u,y-v)e^{\pi i(uy-vx)}\ dx\,dy\bigg)du\,dv\\
&=\iint_{Q}\bigg(\iint_{\R^2}\p_n(r,s)e^{\pi i(us-vr)}\ dr\,ds\bigg)du\,dv.\numberthis \label{4}
\end{align*}
Thus, we need to compute
\begin{align*}
\iint_{\R^2}\p_n(r,s)e^{\pi i(us-vr)}\ dr\,ds,\ \ \ \forall\ n\in\N.
\end{align*}
We shall use induction on $n$. First, for $n=1$, we set
\begin{align*}
\iint_{\R^2}\p_1(r,s)e^{\pi i(us-vr)}\ dr\,ds&=\bigg(\int_0^1e^{-\pi ivr}\ dr\bigg)\bigg(\int_0^1e^{\pi ius}\ ds\bigg)\\
&=\frac{1}{\pi^2}\bigg(\frac{e^{\pi iu}-1}{u}\bigg)\bigg(\frac{e^{-\pi iv}-1}{v}\bigg)\\
&=: L_1(u,v),\numberthis \label{3}.
\end{align*}
Now, assume that the result is true for $n\geq 1$, that is,
\begin{align}\label{5}
\iint_{\R^2}\p_n(r,s)e^{\pi i(us-vr)}\ dr\,ds=L_n(u,v).
\end{align}
Using \eqref{2}, Fubini's theorem, and then applying a change of variables, yields
\begin{align*}
\iint_{\R^2}\p_{n+1}(r,s)e^{\pi i(us-vr)}\ dr\,ds&=\iint_Q\bigg(\iint_{\R^2}\p_n(r,s)e^{\pi i((p+u)s-(q+v)r)}\ dr\,ds\bigg)e^{\pi i(uq-vp)}\ dp\,dq\\
&=\iint_QL_n(p+u,q+v)e^{\pi i(uq-vp)}\ dp\,dq.
\end{align*}
Hence, if we denote the left-hand side of the above equation by $L_{n+1}(u,v)$, we obtain
\begin{align*}
L_{n+1}(u,v)=\iint_QL_n(p+u,q+v)e^{\pi i(uq-vp)}\ dpdq.
\end{align*}
Therefore, by \eqref{3}, we get
\begin{align*}
L_2(u,v)&=\iint_QL_1(p+u,q+v)e^{\pi i(uq-vp)}\ dpdq\\
&=\frac{1}{\pi^2}\bigg(\int_0^1\frac{e^{\pi i(p+u)}-1}{p+u}e^{-\pi ivp}\ dp\bigg)\bigg(\int_0^1\frac{e^{-\pi i(q+v)}-1}{q+v}e^{\pi iuq}\ dq\bigg)\\
&=\frac{1}{\pi^2}\big(\Ei(-\pi iuv)-\Ei(-\pi iu(v-1))-\Ei(-\pi i(u+1)v)+\Ei(-\pi i(u+1)(v-1))\big)\times\\
&\hspace{2 cm}\big(\Ei(\pi iuv)-\Ei(\pi iu(v+1))-\Ei(\pi i(u+1)v)+\Ei(\pi i(u+1)(v+1))\big),
\end{align*}
with $\Ei$ denoting the exponential integral
\begin{align*}
\Ei(z)=-\int_{-z}^\infty \frac{e^{-t}}{t}\ dt,\quad \abs{\arg z} < \pi,
\end{align*}
where the principal value of the integral is taken and the branch cut ranges from $-\infty$ to $0$. (Cf. \cite{leb}.) Now, using \eqref{5} in \eqref{4} and induction on $n$, we can obtain the values for
\begin{align*}
\iint_{\R^2}\phi_{n+1}(x,y)\ dx\,dy
\end{align*}
for all $2\leq n \in \N$.
\end{proof}
\begin{proposition}
The kernel of the {\it Weyl} transform of $\p_1$ is given by
\begin{align*}
K_{\p_1}(\xi,\eta)&= 
e^{\frac{\pi i}{2}(\xi+\eta)}\sinc\left(\frac{\xi+\eta}{2}\right)\x_{[0,1]}(\eta-\xi),\ \ \ \xi,\eta\in\R.
\end{align*}
For $n\geq 2$, the kernel of the {\it Weyl} transform of $\p_n$ can be evaluated recursively as follows:
\begin{align*}
K_{\p_n}(\xi,\eta)=e^{\pi i\eta}\int_0^1e^{-\frac{\pi i}{2}y_{n-1}}\sinc\left(\frac{2\eta-y_{n-1}}{2}\right)K_{\p_{n-1}}(\xi,\eta-y_{n-1})\ dy_{n-1}.
\end{align*}
\end{proposition}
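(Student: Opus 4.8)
\emph{Proof proposal.} The plan is to compute $K_{\p_1}$ directly from the integral formula for the kernel of a Weyl transform, and then to obtain the recursion for $n\ge 2$ from the two facts that $\p_n=\p_{n-1}\times\p_1$ and that $W$ carries twisted convolution to operator composition, i.e. $W(\p_n)=W(\p_{n-1})W(\p_1)$. Since $\p_1\in L^1\cap L^2(\R^2)$ and each $\p_n$ is a twisted convolution of compactly supported functions (hence again in $L^2(\R^2)$, as one checks inductively), all operators in sight are \textit{Hilbert--Schmidt}, hence honest integral operators, and the composition of the integral operators with kernels $K_{\p_{n-1}}$ and $K_{\p_1}$ is again an integral operator, with kernel $(\xi,\eta)\mapsto\int_\R K_{\p_{n-1}}(\xi,\zeta)\,K_{\p_1}(\zeta,\eta)\,d\zeta$. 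This identifies $K_{\p_n}$ with that ``kernel product''.

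First I would settle the base case. Applying $K_f(\xi,\eta)=\int_\R f(x,\eta-\xi)e^{\pi ix(\xi+\eta)}\,dx$ to $\p_1(x,y)=\x_{[0,1)}(x)\x_{[0,1)}(y)$, the factor $\x_{[0,1)}(\eta-\xi)$ pulls out of the $x$-integral, leaving $\x_{[0,1)}(\eta-\xi)\int_0^1 e^{\pi ix(\xi+\eta)}\,dx$. Writing $a:=\pi(\xi+\eta)$ and using $\int_0^1 e^{iax}\,dx=\frac{e^{ia}-1}{ia}=e^{ia/2}\,\frac{\sin(a/2)}{a/2}$ (valid also at $a=0$ by continuity), one recognizes $\frac{\sin(a/2)}{a/2}=\sinc\!\big(\tfrac{\xi+\eta}{2}\big)$ and $e^{ia/2}=e^{\frac{\pi i}{2}(\xi+\eta)}$, which gives the asserted formula for $K_{\p_1}$; the interval endpoint is immaterial since it has measure zero, so one may write $\x_{[0,1]}$.

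For the recursive step I would substitute the just-computed $K_{\p_1}$ into the kernel-product formula:
\[
K_{\p_n}(\xi,\eta)=\int_\R K_{\p_{n-1}}(\xi,\zeta)\,e^{\frac{\pi i}{2}(\zeta+\eta)}\sinc\!\Big(\tfrac{\zeta+\eta}{2}\Big)\,\x_{[0,1]}(\eta-\zeta)\,d\zeta,
\]
where the indicator confines $\zeta$ to the bounded interval $[\eta-1,\eta]$, so the integral converges absolutely ($K_{\p_{n-1}}(\xi,\cdot)\in L^2$ for a.e.\ $\xi$). The change of variables $y_{n-1}:=\eta-\zeta$, i.e.\ $\zeta=\eta-y_{n-1}$, turns the range into $y_{n-1}\in[0,1]$ and gives $\zeta+\eta=2\eta-y_{n-1}$, hence $e^{\frac{\pi i}{2}(\zeta+\eta)}=e^{\pi i\eta}e^{-\frac{\pi i}{2}y_{n-1}}$ and $\sinc\!\big(\tfrac{\zeta+\eta}{2}\big)=\sinc\!\big(\tfrac{2\eta-y_{n-1}}{2}\big)$; pulling the $\eta$-only factor $e^{\pi i\eta}$ out of the integral yields exactly the stated recursion.

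The computations are routine; the two points that need care are (i) confirming $\p_n\in L^2(\R^2)$, so that $W(\p_n)$ is \textit{Hilbert--Schmidt} and the homomorphism identity $W(\p_n)=W(\p_{n-1})W(\p_1)$ recorded in Section~2 applies, and (ii) justifying that the kernel of a composition of two \textit{Hilbert--Schmidt} integral operators is the fiberwise $L^2$-pairing of their kernels — a standard Fubini/Cauchy--Schwarz argument that should nonetheless be spelled out. I expect (ii) to be the only genuine (and minor) obstacle.
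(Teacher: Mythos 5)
Your proposal is correct and follows essentially the same route as the paper: compute $K_{\p_1}$ directly from the kernel formula, then use $W(\p_n)=W(\p_{n-1})W(\p_1)$ together with the composition rule for kernels of integral operators and the change of variables $y_{n-1}=\eta-\zeta$. The only (cosmetic) difference is that the paper first writes out the full $(n-1)$-fold iterated kernel of $W(\p_1)^n$ before collapsing it to the two-factor recursion, whereas you pass to the two-factor composition immediately, which is slightly more direct.
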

\begin{proof}
Recall that $\phi_1(x,y)=\x_{[0,1)}(x)\x_{[0,1)}(y)$ and that the {\it Weyl} transform of $\p_1$ is an integral operator whose kernel $K_{\p_1}$ is given by
\begin{align*}
K_{\p_1}(\xi,\eta)&=\int_{\R}\p_1(x,\eta-\xi)e^{\pi ix(\xi+\eta)}\ dx\\
&=\x_{[0,1)}(\eta-\xi)\int_0^1e^{\pi ix(\xi+\eta)}\ dx\\
&=\frac{1}{\pi i(\xi+\eta)}(e^{\pi i(\xi+\eta)}-1)\x_{[0,1)}(\eta-\xi)\\
&=e^{\frac{\pi i}{2}(\xi+\eta)}\frac{\sin(\frac{\pi}{2}(\xi+\eta))}{\frac{\pi}{2}(\xi+\eta)}\x_{[0,1)}(\eta-\xi),\ \ \ \xi+\eta\neq 0.
\end{align*}
If $\xi+\eta=0$, then 
\begin{align*}
K_{\p_1}(\xi,\eta)&=\x_{[0,1)}(\eta-\xi).
\end{align*}
Thus, using the sinc function, we express the kernel in the form
\begin{align*}
K_{\p_1}(\xi,\eta)&= 
e^{\frac{\pi i}{2}(\xi+\eta)}\sinc\left(\frac{\xi+\eta}{2}\right)\x_{[0,1)}(\eta-\xi).
\end{align*}
As, $\phi_m = \underset{i = 1}{\overset{m-1}{\times}} \phi_i$, $W(\p_m)=W(\p_1)^m$ since $W(\p\times\psi)=W(\p)W(\psi)$. It is known that if $T$ is an integral operator with kernel $K(\xi,\eta)$, then $T^2$ is an integral operator with kernel
\begin{align*}
\int K(\xi,y)K(y,\eta)\ dy.
\end{align*}
Thus, in general, $T^n$ is an integral operator with kernel
\begin{align*}
\int\cdots\int K(\xi,y_1)K(y_1,y_2)\cdots K(y_{n-2},y_{n-1})K(y_{n-1},\eta)\ dy_{n-1}\cdots dy_1.
\end{align*}
Hence, the kernel of {\it Weyl} transform of $\p_n$ is given by
\begin{align*}
K_{\p_n}(\xi,\eta) & =e^{\frac{\pi i}{2}(\xi+\eta)}\int_{y_1\in\R}\cdots\int_{y_{n-1}\in\R} e^{\pi i(y_1+\cdots+y_{n-1})}\sinc\left(\frac{\xi+y_1}{2}\right)\x_{[0,1)}(y_1-\xi)\times\\
&\qquad\sinc\left(\frac{y_1+y_2}{2}\right)\x_{[0,1)}(y_2-y_1)\cdots \sinc\left(\frac{y_{n-2}+y_{n-1}}{2}\right)\x_{[0,1)}(y_{n-1}-y_{n-2})\times\\
&\qquad\sinc\left(\frac{y_{n-1}+\eta}{2}\right)\x_{[0,1)}(\eta-y_{n-1})\ dy_{n-1}\cdots dy_1.
\end{align*}
First, we recursively express the kernel $K_{\p_n}$ in terms of $K_{\p_{n-1}}$. Observe that
\begin{align*}
K_{\p_2}(\xi,\eta)&=e^{\frac{\pi i}{2}(\xi+\eta)}\int_\R e^{\pi iy_1}\sinc\left(\frac{\xi+y_1}{2}\right)\sinc\left(\frac{y_1+\eta}{2}\right)\x_{[0,1)}(y_1-\xi)\x_{[0,1)}(\eta-y_1)\ dy_1\\
&=e^{\frac{\pi i}{2}(\xi+\eta)}\int_{y_1\in[\xi,\xi+1)\cap(\eta-1,\eta]}e^{\pi iy_1}\sinc\left(\frac{\xi+y_1}{2}\right)\sinc\left(\frac{y_1+\eta}{2}\right)\ dy_1.
\end{align*}
The above integral can be rewritten as
\begin{align*}
K_{\p_2}(\xi,\eta)&=e^{\frac{\pi i}{2}(\xi+\eta)}e^{\pi i\eta}\int_0^1 e^{-\pi iy_1}\sinc\left(\frac{\xi+\eta-y_1}{2}\right)\sinc\left(\frac{2\eta-y_1}{2}\right)\x_{(\eta-\xi-1,\eta-\xi]}(y_1)\ dy_1.
\end{align*}
Therefore,
\begin{align*}
K_{\p_n}(\xi,\eta) & =e^{\frac{\pi i}{2}(\xi+\eta)}\int_{y_{n-1}\in\R}\bigg(\int_{y_1\in\R}\cdots\int_{y_{n-2}\in\R} e^{\pi i(y_1+\cdots+y_{n-2})}\sinc\left(\frac{\xi+y_1}{2}\right)\x_{[0,1)}(y_1-\xi)\times\\
& \qquad\cdots\times\sinc\left(\frac{y_{n-2}+y_{n-1}}{2}\right)\x_{[0,1)}(y_{n-1}-y_{n-2})\ dy_{n-2}\cdots dy_1\bigg)\times\\
& \qquad \times e^{\pi iy_{n-1}}\sinc\left(\frac{y_{n-1}+\eta}{2}\right)\x_{[0,1)}(\eta-y_{n-1})\ dy_{n-1}\\
&=e^{\frac{\pi i}{2}(\xi+\eta)}\int_{y_{n-1}\in\R}e^{-\frac{\pi i}{2}(\xi+y_{n-1})}K_{\p_{n-1}}(\xi,y_{n-1})e^{\pi iy_{n-1}}\sinc\left(\frac{y_{n-1}+\eta}{2}\right)\x_{[0,1)}(\eta-y_{n-1})\ dy_{n-1}.
\end{align*}
Thus,
\begin{align*}
K_{\p_n}(\xi,\eta)=e^{\pi i\eta}\int_0^1e^{-\frac{\pi i}{2}y_{n-1}}\sinc\left(\frac{2\eta-y_{n-1}}{2}\right)K_{\p_{n-1}}(\xi,\eta-y_{n-1})\ dy_{n-1}.
\end{align*}
\end{proof}
\begin{proposition}
Given $f\in L^1(\R^2)$, we have the following identities:
\begin{align*}
\iint_{\R^2}f(x,y)\p_1(x,y)\ dx\,dy=\iint_Qf(x_1,y_1)\ dx_1\,dy_1, 
\end{align*}
\begin{align*}
\iint_{\R^2}f(x,y)\p_2(x,y)\ dx\,dy=\iint_{Q^2}e^{\pi i(x_2y_1-y_2x_1)}f(x_1+x_2,y_1+y_2)\ dx_1\,dy_1\,dx_2\,dy_2
\end{align*}
and
\begin{align*}
\iint_{\R^2}f(x,y)\p_3(x,y)\ dx\,dy=\iint_{Q^3}e^{\pi i[(x_2+x_3)(y_1+y_2)-(x_1+x_2)(y_2+y_3)]}f(x_1+x_2+x_3,y_1+y_2+y_3)\\ dx_1\,dy_1\,dx_2\,dy_2\,dx_3\,dy_3. 
\end{align*}
For $n\geq 4$,
\begin{align*}
\iint_{\R^2}f(x,y)\p_n(x,y)\ dx\,dy=\iint_{Q^n}e^{\pi i[(x_2+\cdots+x_n)(y_1+\cdots+y_{n-1})-(x_1+\cdots+x_{n-1})(y_2+\cdots+y_n)]}\ \times\\
e^{\pi i[\{y_2(x_3+\cdots+x_{n-1})-x_2(y_3+\cdots+y_{n-1})\}+\{y_3(x_4+\cdots+x_{n-1})-x_3(y_4+\cdots+y_{n-1})\}+\cdots+\{y_{n-3}(x_{n-2}+x_{n-1})-x_{n-3}(y_{n-2}+y_{n-1})\}]}\ \times\\
e^{\pi i(y_{n-2}x_{n-1}-x_{n-2}y_{n-1})}f(x_1+x_2+\cdots +x_n,y_1+y_2+\cdots +y_n)\ dx_1\,dy_1\,dx_2\,dy_2\cdots dx_n\,dy_n.
\end{align*}
\end{proposition}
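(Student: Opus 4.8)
The plan is to prove all four displayed identities simultaneously by induction on $n$, using only the recursion $\p_{n+1}=\p_n\times\p_1$, i.e.\ \eqref{2}. It is cleanest to prove the equivalent, more symmetric statement: for every $n\in\N$ and every $g\in L^1(\R^2)$,
\[
\iint_{\R^2}g(x,y)\,\p_n(x,y)\,dx\,dy=\iint_{Q^n}g\Bigl(\textstyle\sum_{i=1}^{n}x_i,\ \sum_{i=1}^{n}y_i\Bigr)\exp\Bigl(\pi i\textstyle\sum_{1\le j<k\le n}(x_ky_j-x_jy_k)\Bigr)\,dx_1\,dy_1\cdots dx_n\,dy_n,
\]
and then to observe that expanding and regrouping the antisymmetric double sum $\sum_{j<k}(x_ky_j-x_jy_k)$ reproduces exactly the phase factors displayed in the statement for $n=2$, $n=3$ and $n\ge4$. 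The base case $n=1$ is immediate, since $\p_1(x,y)=\x_{[0,1)}(x)\x_{[0,1)}(y)$, so the integral simply restricts to $Q$ and the phase sum is empty.

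For the inductive step I would first record that $\p_n$ is bounded, with $\|\p_{n+1}\|_\infty\le\|\p_n\|_\infty\|\p_1\|_{L^1}\le1$ by induction, and has support in $[0,n]^2$ by the preceding Proposition; hence for $g\in L^1(\R^2)$ every integral below converges absolutely and Fubini's theorem applies. Substituting \eqref{2} and interchanging the order of integration gives
\[
\iint_{\R^2}g(x,y)\,\p_{n+1}(x,y)\,dx\,dy=\iint_{Q}\Bigl[\iint_{\R^2}g(x,y)\,\p_n(x-a,y-b)\,e^{\pi i(ay-bx)}\,dx\,dy\Bigr]da\,db.
\]
In the inner integral I then apply the change of variables $x=u+a$, $y=v+b$; the quadratic cross terms cancel, since $a(v+b)-b(u+a)=av-bu$, and the inner integral becomes $\iint_{\R^2}g_{a,b}(u,v)\,\p_n(u,v)\,du\,dv$ with $g_{a,b}(u,v):=g(u+a,v+b)\,e^{\pi i(av-bu)}\in L^1(\R^2)$.

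Next I apply the inductive hypothesis to $g_{a,b}$. Writing $g_{a,b}\bigl(\sum_{i\le n}x_i,\sum_{i\le n}y_i\bigr)=g\bigl(\sum_{i\le n}x_i+a,\sum_{i\le n}y_i+b\bigr)\exp\bigl(\pi i(a\sum_{i\le n}y_i-b\sum_{i\le n}x_i)\bigr)$ and relabelling $(x_{n+1},y_{n+1}):=(a,b)$, the extra exponential becomes $\exp\bigl(\pi i\sum_{j=1}^{n}(x_{n+1}y_j-x_jy_{n+1})\bigr)$, which is precisely the new block of terms that extends $\sum_{1\le j<k\le n}$ to $\sum_{1\le j<k\le n+1}$; simultaneously the outer integration over $(a,b)\in Q$ upgrades $Q^n$ to $Q^{n+1}$. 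This closes the induction. The one genuinely laborious point — and the step I expect to be the main obstacle — is the final combinatorial regrouping of $\sum_{1\le j<k\le n}(x_ky_j-x_jy_k)$ into the three-line bracketed expression stated for $n\ge4$; this is purely mechanical (verify against $n=2,3,4$, then pair up terms in general), and no analytic subtlety beyond the elementary Fubini justification above enters anywhere.
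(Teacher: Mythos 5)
Your proof is correct and follows essentially the same route as the paper: both repeatedly apply the recursion \eqref{2} together with Fubini's theorem and a change of variables, peeling off one factor of $\p_1$ at a time and accumulating the phase (the paper merely packages the change of variables as a composition of twisted translations, invoking $T_{(k_1,l_1)}^tT_{(k_2,l_2)}^t=e^{-\pi i(k_1l_2-l_1k_2)}T_{(k_1+k_2,l_1+l_2)}^t$, and arrives at the same unified phase $\sum_{1\le j<k\le n}(x_ky_j-x_jy_k)$ in its equation \eqref{7}). The regrouping you flag as the main obstacle does go through: the first bracket in the stated $n\geq 4$ formula collects exactly those terms of $\sum_{j<k}(x_ky_j-x_jy_k)$ in which $j$ or $k$ lies in $\{1,n\}$, and the remaining brackets sum to $\sum_{2\le j<k\le n-1}(x_ky_j-x_jy_k)$.
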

\begin{proof}
We have
\begin{align*}
\iint_{\R^2}f(x,y)\p_1(x,y)\ dx\,dy&=\int_0^1\int_0^1f(x,y)\ dx\,dy =\iint_Qf(x_1,y_1)\ dx_1\,dy_1.
\end{align*}
Using \eqref{2} and then applying Fubini's theorem yields
\begin{align*}
\iint_{\R^2}f(x,y)\p_2(x,y)\ dx\,dy&=\iint_{\R^2}f(x,y)\bigg(\int_{u=0}^1\int_{v=0}^1\p_1(x-u,y-v)e^{\pi i(uy-vx)}\ dv\,du\bigg)dx\,dy\\
&=\iint_Q\bigg(\iint_{\R^2}f(x,y)\p_1(x-u,y-v)e^{\pi i(uy-vx)}\ dx\,dy\bigg)du\,dv.
\end{align*}
Applying the change of variables $x-u=r,\ y-v=s$, we obtain
\begin{align*}
\iint_{\R^2}f(x,y)\p_2(x,y)\ dx\,dy&=\iint_Q\bigg(\iint_{\R^2}f(r+u,s+v)\p_1(r,s)e^{\pi i(su-rv)}\ dr\,ds\bigg)du\,dv\\
&=\iint_Q\iint_Qf(r+u,s+v)e^{\pi i(su-rv)}\ dr\,ds\,du\,dv\\
&=\iint_{Q^2}e^{\pi i(x_2y_1-y_2x_1)}f(x_1+x_2,y_1+y_2)\ dx_1\,dy_1\,dx_2\,dy_2.
\end{align*}
Further, for $n=3$
\begin{align*}
\iint_{\R^2}f(x,y)\p_3(x,y)\ dx\,dy=\iint_{Q^3}e^{\pi i[(x_2+x_3)(y_1+y_2)-(x_1+x_2)(y_2+y_3)]}f(x_1+x_2+x_3,y_1+y_2+y_3)\\ dx_1\,dy_1\,dx_2\,dy_2\,dx_3\,dy_3. 
\end{align*}
Proceeding in this way, we get for $n\geq 4$,
\begin{align*}
\iint_{\R^2}f(x,y)\p_n(x,y)\ dx\,dy&=\iint_{\R^2}f(x,y)\bigg(\int_{x_n=0}^1\int_{y_n=0}^1\hspace{-2mm}\p_{n-1}(x-x_n,y-y_n)e^{\pi i(x_ny-y_nx)}\ dy_n\,dx_n\bigg)dx\,dy\\
&=\iint_Q\bigg(\iint_{\R^2}f(x,y)\p_{n-1}(x-x_n,y-y_n)e^{\pi i(x_ny-y_nx)}\ dx\,dy\bigg)dx_n\,dy_n\\
&=\iint_Q\bigg(\iint_{\R^2}T_{(-x_n,-y_n)}^tf(x,y)\p_{n-1}(x,y)\ dx\,dy\bigg)dx_n\,dy_n.\numberthis \label{6}
\end{align*}
As \eqref{6} holds also for $T_{(-x_n,-y_n)}^tf$, we obtain
\begin{align*}
\iint_{\R^2}T_{(-x_n,-y_n)}^tf(x,y)\p_{n-1}(x,y)\ dx\,dy=\iint_Q\bigg(\iint_{\R^2}T_{(-x_{n-1},-y_{n-1})}^tT_{(-x_n,-y_n)}^tf(x,y)\p_{n-2}&(x,y)\ dx\,dy\bigg)\\
&dx_{n-1}\,dy_{n-1}.
\end{align*}
Hence, \eqref{6} becomes
\begin{align*}
\iint_{\R^2}f(x,y)\p_n(x,y)\ dx\,dy=\iint_{Q^2}\bigg(\iint_{\R^2}T_{(-x_{n-1},-y_{n-1})}^tT_{(-x_n,-y_n)}^tf(x,y)\p_{n-2}&(x,y)\ dx\,dy\bigg)\\
&dx_n\,dy_n\,dx_{n-1}\,dy_{n-1}.
\end{align*}
Proceeding in this way, produces
\begin{align*}
\iint_{\R^2}f(x,y)\p_n(x,y)\ dx\,dy=\iint_{Q^{n-1}}\bigg(\iint_{\R^2}T_{(-x_2,-y_2)}^tT_{(-x_3,-y_3)}^t\cdots T&_{(-x_n,-y_n)}^tf(x,y)\p_1(x,y)\ dx\,dy\bigg)\\
&dx_2\,dy_2\,dx_3\,dy_3\cdots dx_n\,dy_n\\
=\iint_{Q^n}T_{(-x_2,-y_2)}^tT_{(-x_3,-y_3)}^t\cdots T_{(-x_n,-y_n)}^tf&(x_1,y_1)\ dx_1\,dy_1\,dx_2\,dy_2\,\cdots dx_n\,dy_n
\end{align*}
Now, composing all twisted translation operators, yields
\begin{align*}
\iint_{\R^2}f(x,y)\p_n(x,y)\ dx\,dy=\iint_{Q^n}e^{\pi i[(x_3y_2-y_3x_2)+\{x_4(y_2+y_3)-y_4(x_2+x_3)\}+\cdots +\{x_n(y_2+\cdots+ y_{n-1})-y_n(x_2+\cdots+x_{n-1})\}]}\\
\times\ T_{(-x_2-x_3-\cdots-x_n,-y_2-y_3-\cdots-y_n)}^tf(x_1,y_1)\ dx_1\,dy_1\,dx_2\,dy_2\cdots dx_n\,dy_n\\
=\iint_{Q^n}e^{\pi i[(x_2y_1-y_2x_1)+\{x_3(y_1+y_2)-y_3(x_1+x_2)\}+\cdots +\{x_n(y_1+\cdots+ y_{n-1})-y_n(x_1+\cdots+x_{n-1})\}]}\numberthis \label{7}\\
\times\ f(x_1+x_2+\cdots +x_n,y_1+y_2+\cdots +y_n)\ dx_1\,dy_1\,dx_2\,dy_2\cdots dx_n\,dy_n.
\end{align*}
Finally, we can rewrite \eqref{7} in the form
\begin{align*}
\iint_{\R^2}f(x,y)\p_n(x,y)\ dx\,dy=\iint_{Q^n}e^{\pi i[(x_2+\cdots+x_n)(y_1+\cdots+y_{n-1})-(x_1+\cdots+x_{n-1})(y_2+\cdots+y_n)]}\ \times\\
e^{\pi i[\{y_2(x_3+\cdots+x_{n-1})-x_2(y_3+\cdots+y_{n-1})\}+\{y_3(x_4+\cdots+x_{n-1})-x_3(y_4+\cdots+y_{n-1})\}+\cdots+\{y_{n-3}(x_{n-2}+x_{n-1})-x_{n-3}(y_{n-2}+y_{n-1})\}]}\ \times\\
e^{\pi i(y_{n-2}x_{n-1}-x_{n-2}y_{n-1})}f(x_1+x_2+\cdots +x_n,y_1+y_2+\cdots +y_n)\ dx_1\,dy_1\,dx_2\,dy_2\cdots dx_n\,dy_n.
\end{align*}
\end{proof}
\begin{proposition}
The twisted B-splines $\phi_1$ and $\phi_2$ satisfy the following partition-of-unity-line property:
\begin{align*}
\iint_{\R^2}\sum_{k,l\in\z}\t\p_1(x,y)\ dx\,dy&=1
\end{align*}
and
\begin{align*}
\iint_{\R^2}\sum_{k,l\in\z}\t\p_2(x,y)\ dx\,dy = C_{\p_2},
\end{align*}
where the constant $C_{\p_2}\approx 0.000160507/\pi^2$. Moreover, the functions $L_n$ defined by 
\[
\iint_Q L_n (u,v) du\,dv := \iint_{\R^2}\sum_{k,l\in\z}\t\p_{n+1}(x,y)\ dx\,dy,
\]
can be recursively computed in terms of their lower orders via
\[
L_{n+1}(u,v)=\iint_Qe^{\pi i(ut-vs)}L_n(u+s,v+t)\ ds\,dt.
\]
\end{proposition}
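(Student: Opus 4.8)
\emph{Sketch of proof.} My plan rests on the observation that a twisted translate interacts with twisted convolution exactly as an ordinary translate does with ordinary convolution: for every $(k,l)\in\z^2$ and appropriate $F,G$,
\[
\t(F\times G)=(\t F)\times G .
\]
This can be read off the {\it Weyl} transform, because $W(\t(F\times G))=\pi(k,l)W(F)W(G)=W(\t F)W(G)=W((\t F)\times G)$ and $W$ is injective (by the {\it Plancherel} formula). Taking $F=\p_n$, $G=\p_1$ and recalling $\p_{n+1}=\p_n\times\p_1$ gives $\t\p_{n+1}=(\t\p_n)\times\p_1$. Since $\supp\p_n=[0,n]\times[0,n]$, the sums $F_n:=\sum_{k,l\in\z}\t\p_n$ are locally finite; summing over $(k,l)$ and interchanging with the (compactly supported) twisted convolution then yields the clean recursion $F_{n+1}=F_n\times\p_1$, the twisted analogue of \eqref{2}. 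From here the plan is to transcribe the proof of the preceding Proposition with $F_n$ in place of $\p_n$, interpreting the (no longer absolutely convergent) integral $\iint_{\R^2}F_n$ as the symmetric limit $\lim_{N\to\infty}\iint_{[-N,N]^2}$, equivalently as $\sum_{k,l}\iint_{\R^2}\t\p_n$ summed in symmetric order.

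For $n=1$: after the translation $(x,y)\mapsto(x+k,y+l)$ the cross phase $e^{\pi i(lk-kl)}$ is trivial, so $\iint_{\R^2}\t\p_1\,dx\,dy=\bigl(\int_0^1 e^{\pi ilx}dx\bigr)\bigl(\int_0^1 e^{-\pi iky}dy\bigr)$. Using $e^{\pi im}=(-1)^m$ for $m\in\z$, each one-dimensional integral equals $1$ when its index is $0$, equals $0$ when the index is a nonzero even integer, and equals $\pm\frac{2}{\pi i\,m}$ when the index $m$ is odd. Summing over $(k,l)$ symmetrically, every contribution with $k\neq0$ or $l\neq0$ is annihilated by $\sum_{m\ \mathrm{odd}}\frac1m=0$, leaving only the $(0,0)$ term; hence $\iint_{\R^2}F_1=1$.

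For the recursion: since $\p_1$ is the indicator of $Q$ (up to the half-open convention), $(F_n\times\p_1)(x,y)=\iint_Q F_n(x-u,y-v)e^{\pi i(uy-vx)}\,du\,dv$. Integrating over $\R^2$, using Fubini, and substituting $r=x-u$, $s=y-v$ makes the mixed term $e^{\pi i(uv-vu)}$ disappear, giving $\iint_{\R^2}F_{n+1}=\iint_Q L_n(u,v)\,du\,dv$ with $L_n(u,v):=\iint_{\R^2}F_n(r,s)e^{\pi i(us-vr)}\,dr\,ds$; this is the function $L_n$ figuring in the statement. Substituting $F_{n+1}=F_n\times\p_1$ into the definition of $L_{n+1}$, expanding the twisted convolution, applying Fubini, and substituting $r\mapsto r-p$, $s\mapsto s-q$, the exponent collapses (the $pq$ cross term cancels) to $\pi i[(p+u)s-(q+v)r]+\pi i(uq-vp)$, whence $L_{n+1}(u,v)=\iint_Q e^{\pi i(uq-vp)}L_n(p+u,q+v)\,dp\,dq$; relabelling $(p,q)\to(s,t)$ produces the displayed recursion.

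It remains to identify $C_{\p_2}$. By the previous step with $n=1$, $\iint_{\R^2}F_2=\iint_Q L_1(u,v)\,du\,dv$, and since on the cell $[k,k+1)\times[l,l+1)$ one has $F_1(r,s)=e^{\pi i(lr-ks)}$, the defining integral for $L_1$ splits cell-by-cell and factors into a product of two series of the same type, namely $\sum_{l\in\z}\dfrac{e^{\pi iul}\bigl((-1)^le^{-\pi iv}-1\bigr)}{l-v}$, each of which can be summed in closed form using the {\it Fourier} expansion of $\sum_{l\in\z}\frac{e^{i\theta l}}{l-v}$ (a cosecant times exponentials). Plugging in the closed form and integrating the resulting elementary functions over $Q$ — these integrals generate $\Si$/$\Ci$/$\log$/$\gamma$ terms, exactly as in the preceding Proposition — produces the numerical value $C_{\p_2}\approx 0.000160507/\pi^2$. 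I expect the real work to lie in the two bookkeeping issues: justifying the symmetric-order interchanges where only conditional convergence is available, and carrying the final special-function integration through cleanly. The structural skeleton, the first three steps above, is a direct port of the preceding Proposition.
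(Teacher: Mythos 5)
Your recursion for $L_n$ and your treatment of $\iint_{\R^2}\sum_{k,l}\t\p_1=1$ are both correct and, once unwound, coincide with the paper's argument: the identity $\t(F\times G)=(\t F)\times G$ is a clean repackaging of the substitution of \eqref{2} into the sum that the paper performs directly, and your symmetric-order cancellation $\sum_{m\,\mathrm{odd}}1/m=0$ is exactly the paper's computation $A=\tfrac{4}{\pi^2}L_M^2\to0$, $B+C=0$. The genuine problem is in your evaluation of $C_{\p_2}$. Your factorization of $L_1$ is right,
\begin{align*}
L_1(u,v)=\frac{1}{\pi^2}\Big(\sum_{p\in\z}e^{-\pi ivp}\,\frac{e^{\pi i(u-p)}-1}{u-p}\Big)\Big(\sum_{q\in\z}e^{\pi iuq}\,\frac{e^{-\pi i(v-q)}-1}{v-q}\Big),
\end{align*}
but if you actually sum these series by the cosecant formula $\sum_{p}\frac{e^{ip\theta}}{u-p}=\frac{\pi e^{iu(\theta-\pi)}}{\sin\pi u}$ ($0<\theta<2\pi$), as you propose, each factor vanishes identically on $(0,1)^2$: writing $\frac{e^{\pi i(u-p)}-1}{u-p}=\pi i\int_0^1e^{\pi i(u-p)t}\,dt$, the symmetric partial sum of the first factor equals $\pi i\int_0^1e^{\pi iut}D_M(v+t)\,dt$ with $D_M$ the period-$2$ Dirichlet kernel, and $v+t$ never meets an even integer for $v,t\in(0,1)$, so the limit is $0$ (the two cosecant contributions cancel exactly). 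Hence your method yields $C_{\p_2}=0$, not $0.000160507/\pi^2$; in particular the closed form is a product of exponentials and cosecants that collapses, and it cannot ``generate $\Si$/$\Ci$/$\log$/$\gamma$ terms'' upon integration over $Q$. Your last paragraph asserts the paper's numerical value without carrying out this computation, and the computation contradicts the assertion.

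What you have run into is precisely the order-sensitivity you flagged as a ``bookkeeping issue'' but did not resolve. The paper integrates each cell term over $Q$ first, obtaining $\i(p,q)$ in terms of $\Ei$, and only then sums over $(p,q)$; you sum over $(p,q)$ first and integrate second. These orders need not agree: the inner integral $\iint_{\R^2}F_1(r,s)e^{\pi i(us-vr)}\,dr\,ds$ is only conditionally convergent, so Fubini is unavailable and the interchange must be argued rather than assumed. Note that your sum-first order is the one consistent with the symmetric-rectangle interpretation $\lim_{M\to\infty}\iint_{[-M,M]^2}$ that both you and the paper use for $\p_1$, and that the terms $\re\i(p,0)$, $\re\i(p,\pm1)$ decay only like $(p-1)^{-2}$, so truncating the paper's sum at $1{,}000$ terms leaves a tail of the same order of magnitude as $1.6\times10^{-4}$ --- the discrepancy is therefore real and must be confronted, not papered over. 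Before this part of your proof can stand you must either justify term-by-term integration and reconcile the result with the pointwise vanishing of $L_1$, or accept that your route gives $0$ and address the stated constant. As written, the $C_{\p_2}$ claim is not proved.
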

\begin{proof}
Recall that $\phi_1(x,y)=\x_{[0,1)}(x)\x_{[0,1)}(y)$. Then
\begin{align*}
\t\p_1(x,y)=e^{\pi i(lx-ky)}\x_{[k,k+1)}(x)\x_{[l,l+1)}(y).
\end{align*}
For each $x\in\R$, there exists a unique $p_x\in\z$ such that $p_x\leq x<p_x+1$. In fact, $p_x=\floor*{x}$. Thus
\begin{align*}
\sum_{k,l\in\z}\t\p_1(x,y)=e^{\pi i(\floor*{y}x-\floor*{x}y)}.
\end{align*}
Hence, 
\begin{align*}
\iint_{\R^2}\sum_{k,l\in\z}\t\p_1(x,y)\ dx\,dy&=\iint_{\R^2}e^{\pi i(\floor*{y}x-\floor*{x}y)}\ dx\,dy\\
&= \lim_{M\to\infty} \iint_{[-M,M]^2} e^{\pi i(\floor*{y}x-\floor*{x}y)}\ dx\,dy\\
&=\lim_{M\to\infty}\int_{x=-M}^{M}\int_{y=-M}^{M}e^{\pi i(\floor*{y}x-\floor*{x}y)}\ dy\,dx,\numberthis \label{8}
\end{align*}
where we used the \emph{Fubini-Tonelli} theorem to obtain the third line. But
\begin{align*}
\int_{x=-M}^{M}\int_{y=-M}^{M}e^{\pi i(\floor*{y}x-\floor*{x}y)}\ dxdy&=\sum_{k,l=-M}^{M-1}\int_{x=k}^{k+1}\int_{y=l}^{l+1}e^{\pi i(lx-ky)}\ dydx\\
&=A+B+C+1,\numberthis \label{9}
\end{align*}
say, where
\begin{align*}
A=\sum_{\substack{k,l=-M \\ (k,l)\neq(0,0)}}^{M-1}\int_{x=k}^{k+1}\int_{y=l}^{l+1}e^{\pi i(lx-ky)}\ dydx,
\end{align*}
\begin{align*}
\hspace{-6mm}& B=\sum_{\substack{k=-M \\ k\neq 0}}^{M-1}\int_{x=k}^{k+1}\int_{y=0}^1e^{-\pi iky}\ dydx,
\end{align*}
\begin{align*}
\hspace{-8mm}& C=\sum_{\substack{l=-M \\ l\neq 0}}^{M-1}\int_{x=0}^{1}\int_{y=l}^{l+1}e^{\pi ilx}\ dydx.
\end{align*}
Integrating $A$, we obtain
\begin{align*}
A&=\frac{1}{\pi^2}\sum_{\substack{k,l=-M \\ (k,l)\neq(0,0)}}^{M-1}\frac{1}{kl}(e^{\pi il}-1)(e^{-\pi ik}-1)\\
&=\frac{4}{\pi^2}\sum_{\substack{k,l=-M \\ (k,l)\neq(0,0),\ k,l\in 2\z+1}}^{M-1}\frac{1}{kl} =\frac{4}{\pi^2}L_M^2,
\end{align*}
where
\begin{align*}
L_M&=\sum_{\substack{k=-M \\ k\neq 0,\ k\in 2\z+1}}^{M-1}\frac{1}{k}=\begin{cases}
0, & \text{$M$ is even};\\
-\frac{1}{M}, & \text{$M$ is odd}.
\end{cases}
\end{align*}
Since $L_M\to 0$ as $M\to\infty$, we have $A\to 0$ as $M\to\infty$. On the other hand, integrating $B$ and $C$, we get
\begin{align*}
B+C&=-\sum_{\substack{k=-M \\ k\neq 0}}^{M-1}\frac{1}{\pi ik}(e^{-\pi ik}-1)+\sum_{\substack{l=-M \\ l\neq 0}}^{M-1}\frac{1}{\pi il}(e^{\pi il}-1)\\
&=-\sum_{\substack{k=-M \\ k\neq 0}}^{M-1}\frac{1}{\pi ik}((-1)^k-1)+\sum_{\substack{l=-M \\ l\neq 0}}^{M-1}\frac{1}{\pi il}((-1)^l-1)\\
&=0.\numberthis \label{10}
\end{align*}
Making use of \eqref{9} and \eqref{10} in \eqref{8} yields
\begin{align*}
\iint_{\R^2}\sum_{k,l\in\z}\t\p_1(x,y)\ dx\,dy&=1+\lim_{M\to\infty}A\\
&=1.
\end{align*}
Now, consider
\be\label{integrals}
\iint_{\R^2}\sum_{k,l\in\z}\t\p_2(x,y)\ dx\,dy = \frac{1}{\pi^2}\int_0^1\int_0^1 \sum_{p,q\in \z} \frac{e^{\pi i (u q -v p)}(e^{\pi i (u-p)}-1)(e^{-\pi i (v-q)}-1)}{(u-p)(v-q)}du\,dv.
\ee
First, we define the integrals
\[
\i(p,q) := \int_0^1\int_0^1\frac{e^{\pi i (u q -v p)}(e^{\pi i (u-p)}-1)(e^{-\pi i (v-q)}-1)}{(u-p)(v-q)}du\,dv,
\]
where $p,q\in \z$. In case that $\abs{p},\abs{q}\notin\{0,1\}$, the above integral evaluates to 
\begin{align*}
[\text{Ei}(i \pi p (q-1))-& \text{Ei}(i \pi p q)+\text{Ei}(i \pi (p+1) q)-\text{Ei}(i \pi (p+1)(q-1))]\times\\
& \left[\text{Ei}(-i \pi (p-1) q)-\text{Ei}(-i \pi p q)+\text{Ei}(-i \pi p (q+1)-\text{Ei}(-i \pi (p-1)(q+1)))\right]
\end{align*}
Setting
\begin{align*}
F_1(p,q) & := \text{Ei}(i \pi p (q-1))-\text{Ei}(i \pi p q)+\text{Ei}(i \pi (p+1) q)-\text{Ei}(i \pi (p+1)(q-1)),\\
F_2(p,q) &:= \text{Ei}(-i \pi (p-1) q)-\text{Ei}(-i \pi p q)+\text{Ei}(-i \pi p (q+1)-\text{Ei}(-i \pi (p-1)(q+1)),
\end{align*}
we note that $F_2(p,q) = \overline{F_1(q,p)}$. Thus,
\[
\i(p,q) := F_1(p,q) \overline{F_1(q,p)}, \qquad \abs{p},\abs{q}\notin\{0,1\}.
\]
Employing the identity $\Ei(- i x) = \overline{\Ei(i x)}$, for $x > 0$, we obtain the following relations: $\i(-p,-q) = \overline{\i(p,q)}$, $\i(-p,q) = \overline{\i(q,-p)}$, and $\i(p,-q) = \overline{\i(-q,p)}$.

For $\abs{p},\abs{q}\in\{0,1\}$, the integrals $\i(p,q)$ evaluate to
\begin{align*}
&\i(0,0) = (\gamma - \Ci(x) + \log\pi)^2 + \Si^2(x);\\
&\i(1,0) = (\text{Ei}(-2 i \pi )+\text{Ei}(-i \pi )+\log 2) (\text{Ci}(\pi ) -i \text{Si}(\pi)-\gamma -\log \pi );\\
&\i(0,1)  = \overline{\i(1,0)};\\
&\i(1,1)  = (\overline{\text{Ei}(i \pi )-\text{Ei}(2 i \pi) }+\log 2) (\text{Ei}(i \pi )-\text{Ei}(2 i \pi)+\log 2);\\
&\i(0,-1) = \i(1,0);\\
&\i(-1,0) = \i(0,1) = \overline{\i(1,0)};\\
&\i(-1,-1) = \i(1,1);\\
&\i(1,-1) = (\text{Ei}(-i \pi )-2 \text{Ei}(-2 i \pi )+\text{Ei}(-4 i \pi )) (\text{Ci}(\pi )+i \text{Si}(\pi )-\gamma -\log \pi);\\
&\i(-1,1) = \overline{\i(1,-1)},
\end{align*}
and 
\begin{align*}
&\i(p,0) = \left(\text{Ei}(-i \pi (p-1))-\text{Ei}(-i \pi p)+\log \left(\frac{p}{p-1}\right)\right) \left(-\text{Ei}(-i \pi p)+\text{Ei}(-i \pi (p+1))+\log \left(\frac{p}{p+1}\right)\right);\\
&\i(p,1) =  \bigg(-\text{Ei}(-i \pi (p-1))+\text{Ei}(-2 i \pi (p-1))+\text{Ei}(-i \pi p)-\text{Ei}(-2 i \pi p)\bigg)\times\\ 
&\hspace*{2.5cm}\left(\text{Ei}(i \pi p)-\text{Ei}(i \pi (p+1))+\log \left(\frac{1+p}{p}\right)\right);\\
&\i(p,-1) = \bigg(\text{Ei}(-i \pi p)-\text{Ei}(-2 i \pi p)-\text{Ei}(-i \pi (p+1))+\text{Ei}(-2 i  \pi(p+1))\bigg)\times\\
&\hspace*{2.5cm} \left(-\text{Ei}(i \pi (p-1))+\text{Ei}(i \pi p)+\log\left(\frac{p-1}{p}\right)\right);\\
&\i(0,q) = \overline{\i(q,0)};\\
&\i(1,q) = \overline{\i(q,1)};\\
&\i(-1,q) = \overline{\i(q,-1)}.
\end{align*}

Finally, for $p\geq 2$ and $q\leq -2$, 
\begin{align*}
\i(p,q) & = (\text{Ei}(i \pi (p-1)(q-1))-\text{Ei}(i \pi p(q-1))-\text{Ei}(i \pi (p-1)q)+\text{Ei}(i \pi p q)) \times \\
& \qquad(\text{Ei}(-i \pi p q)-\text{Ei}(-i \pi (p+1)q)-\text{Ei}(-i \pi p(q+1))+\text{Ei}(-i \pi (p+1)(q+1))).
\end{align*}
Next, we will show that $\sum\limits_{p,q\in\z} \i(p,q)$ converges as an iterated sum which allows the interchange of integrals and sums in \eqref{integrals}.

For this purpose, we need to write $\sum\limits_{p,q\in\z} \i(p,q)$ as follows.
\begin{align*}\label{sums}
\sum\limits_{p,q\in\z} \i(p,q) & = \sum_{\abs{p},\abs{q}\leq 1} \i(p,q) + \sum_{\abs{p}\geq 2} \i(p,0) + \sum_{\abs{p}\geq 2} \i(p,1) + \sum_{\abs{p}\geq 2} \i(p,-1)\numberthis\\
& \quad + \sum_{\abs{q}\geq 2} \i(0,q)  + \sum_{\abs{q}\geq 2} \i(1,q) + \sum_{\abs{q}\geq 2} \i(-1,q)\\
& \quad + \sum_{p\geq 2, q\leq -2} \i(p,q) + \sum_{p\leq 2, q\geq 2} \i(p,q)+ \sum_{\abs{p},\abs{q}\geq 2} \i(p,q) .
\end{align*}

Using the afore-mentioned identities between the integrals $\i(p,q)$, the above sums reduce to
\begin{align*}
\sum\limits_{p,q\in\z} \i(p,q) & = \sum_{\abs{p},\abs{q}\leq 1} \i(p,q) + 2\sum_{{p}\geq 2} 2\re{\i(p,0)} + 2\sum_{{p}\geq 2} 2\re{\i(p,1)} + 2\sum_{{p}\geq 2} 2\re{\i(p,-1)}\\
& \quad + \sum_{p\geq 2, q\leq -2} 2\re{\i(p,q)} + \sum_{{p},{q}\geq 2} 2\re{\i(p,q)}.
\end{align*}

Recalling that $\Ei(i x) = \Ci(x) + i(\Si(x) -\frac\pi2)$ \cite{leb}, we express $\i(p,0)$, $\i(p,1)$, $\i(p,-1)$, and $\i(p,q)$ ($p\geq 2$, $q\leq -2$) in terms of $\Ci$ and $\Si$, and take the real part. This yields,
\begin{align*}
\re\i(p,0) &= \left(\Ci(\pi(p-1))-\Ci(\pi p)+\log\left(\frac{p}{p-1}\right)\right)\left(\Ci(\pi(p+1))-\Ci(\pi p)+\log\left(\frac{p}{p+1}\right)\right)\\
& \quad -\bigg(\Si(\pi p)-\Si(\pi(p-1))\bigg)\bigg(\Si(\pi p)-\Si(\pi(p+1))\bigg);\\
\re\i(p,1) &= \bigg(\Ci(\pi p) - \Ci(\pi(p-1))+\Ci(2\pi(p-1))-\Ci(2\pi p)\bigg)\left(\Ci(\pi p)-\Ci(\pi(p+1))+\log\left(\frac{p+1}{p}\right)\right)\\
& \quad - \bigg(\Si(\pi p) - \Si(\pi(p-1))+\Si(2\pi(p-1))-\Si(2\pi p)\bigg)\bigg(\Si(\pi p)-\Si(\pi(p+1))\bigg);\\
\re\i(p,-1) &= \bigg(\Ci(\pi p) - \Ci(2\pi p) + \Ci(2\pi(p+1))-\Ci(\pi(p+1))\bigg)\left(\Ci(\pi p) - \Ci(\pi(p-1))+\log\left(\frac{p-1}{p}\right)\right)\\
& \quad - \bigg(\Si(\pi p) - \Si(2\pi p) + \Si(2\pi(p+1))-\Si(\pi(p+1))\bigg)\bigg(\Si(\pi p) - \Si(\pi(p-1))\bigg);\\
\re\i(p,q) &= \bigg(\Ci(\pi(p-1)(q-1)) - \Ci(\pi q(q-1)) +\Ci(\pi p q) - \Ci(\pi(p-1)q)\bigg)\times\\
& \qquad\bigg(\Ci(\pi(p+1)(q+1)) - \Ci(\pi (p+1)q) +\Ci(\pi p q) - \Ci(\pi p(q+1))\bigg)\\
& \quad + \bigg(\Si(\pi(p-1)(q-1)) - \Si(\pi q(q-1)) +\Si(\pi p q) - \Si(\pi(p-1)q)\bigg)\times\\
& \qquad\bigg(\Si(\pi(p+1)(q+1)) - \Si(\pi (p+1)q) +\Si(\pi p q) - \Si(\pi p(q+1))\bigg)
\end{align*}
Similarly, we write $F_1(p,q)$ in terms of $\Ci$ and $\Si$ and obtain
\begin{align*}
F_1(p,q) &= \left[\Ci(\pi p (q-1))-\Ci(\pi p q) +\Ci(\pi(p+1)q)-\Ci(\pi(p+1)(q-1))\right] +\\
& \quad + i \left[\Si(\pi p (q-1))-\Si(\pi p q) +\Si(\pi(p+1)q)-\Si(\pi(p+1)(q-1))\right]\\
& \quad =: u(p,q) + i v(p,q).
\end{align*}
The sum $\sum\limits_{{p},{q}\geq 2} 2\re{\i(p,q)}$ has then the form
\[
\sum_{{p},{q}\geq 2} 2\re{\i(p,q)} = \sum_{{p},{q}\geq 2} 2\big[u(p,q) u(q,p) + v(p,q)v(q,p)\big].
\]

The main idea to show convergence of the infinite sums in \eqref{sums} is to estimate the $\Ci$ and $\Si$ terms. To this end, we use the following asymptotic representations of these two functions found in \cite{leb}.
\begin{subequations}
\begin{align}
\Ci (x) & = \frac{\sin x}{x}\,P(x) - \frac{\cos x}{x}\,Q(x)\label{asym1},\\
\frac\pi2 - \Si (x) & = \frac{\cos x}{x}\,P(x) + \frac{\sin x}{x}\,Q(x)\label{asym2},
\end{align}
\end{subequations}
where
\begin{subequations}
\begin{align}
P(x) &= \sum_{k=0}^n \frac{(-1)^k (2k)!}{x^{2k}} + \cO(|x|^{-2n-2}),\label{PQ1}\\
Q(x) &= \sum_{k=0}^n \frac{(-1)^k (2k+1)!}{x^{2k+1}} + \cO(|x|^{-2n-3})\label{PQ2}.
\end{align}
\end{subequations}

In addition, we need estimates on the logarithmic terms appearing in the above formulas for the real parts of $\i(p,0)$, $\i(p,1)$, and $\i(p,-1)$. We summarize these estimates in the following lemma.
\begin{lemma}
Let $p\geq 2$. Then
\begin{enumerate}
\item[\emph{(a)}] $\log\left(\dfrac{p}{p-1}\right)\leq \dfrac{1}{p-1}\leq\dfrac{2}{p}$;
\item[\emph{(b)}] $\abs{\log\left(\dfrac{p}{p+1}\right)}\leq \dfrac{1}{p-1}$;
\item[\emph{(c)}] $\log\left(\dfrac{p}{p-1}\right)\abs{\log\left(\dfrac{p}{p+1}\right)}\leq \dfrac{1}{(p-1)^2}$.
\end{enumerate}
\end{lemma}
\begin{proof}
These statements follow immediately from rewriting the inequalities in terms of the exponential function and using its power series expansion.
\end{proof}

Using \eqref{asym1} and \eqref{asym2}, we rewrite $\re \i(p,0)$ in the following way.
\begin{align*}
\re \i(p,0) & = \left[\frac{(-1)^p}{\pi(p-1)}\,Q(\pi(p-1)) - \frac{(-1)^{p+1}}{\pi p}\,Q(\pi p) + \log\left(\frac{p}{p-1}\right)\right]\times\\
& \qquad \left[\frac{(-1)^p}{\pi(p +1)}\,Q(\pi(p+1)) - \frac{(-1)^{p+1}}{\pi p}\,Q(\pi p) + \log\left(\frac{p}{p+1}\right)\right]\\
& - \left[\frac{(-1)^{p-1}}{\pi(p-1)}\,P(\pi(p-1)) - \frac{(-1)^{p}}{\pi p}\,P(\pi p)\right]\left[\frac{(-1)^{p+1}}{\pi(p+1)}\,P(\pi(p+1)) - \frac{(-1)^{p}}{\pi p}\,P(\pi p)\right]
\end{align*}

Therefore, 
\begin{align*}
\abs{\re \i(p,0)} &\leq \left[\frac{\abs{Q(\pi(p-1))}}{\pi(p-1)} + \frac{\abs{Q(\pi p)}}{\pi p} + \log\left(\frac{p}{p-1}\right)\right] \left[\frac{\abs{Q(\pi(p+1))}}{\pi(p+1)} + \frac{\abs{Q(\pi p)}}{\pi p} + \abs{\log\left(\frac{p}{p+1}}\right)\right]\\
& \quad + \left[\frac{\abs{P(\pi(p-1))}}{\pi(p-1)} + \frac{P(\pi p)}{\pi p}\right]\left[\frac{\abs{P(\pi(p+1))}}{\pi(p+1)} + \frac{\abs{P(\pi p)}}{\pi p}\,\right]
\end{align*}

By \eqref{PQ1} and \eqref{PQ2}, there exist positive constants $c_1, \ldots, c_4$ such that
\begin{align*}
\abs{\re \i(p,0)} &\leq \left[\frac{c_1}{p-1} + \log\left(\frac{p}{p-1}\right)\right]\left[\frac{c_2}{p} + \abs{\log\left(\frac{p}{p+1}}\right)\right] + \left[\frac{c_3}{p-1}\right] \left[\frac{c_4}{p}\right]\\
& \leq \frac{c_5}{p(p-1)} +\frac{c_6}{p^2} + \frac{c_7}{(p-1)^2} \leq \frac{c_8}{(p-1)^2}.
\end{align*}

Here, we used the statements in the lemma to obtain the last line.

In a similar fashion, one shows that $\abs{\re \i(p,\pm1)}$ is bounded above by $\frac{c_\pm}{(p-1)^2}$, for positive constants $c_\pm$. Therefore, the single infinite sums in \eqref{sums} are all bounded above by a convergent series of the form $c\,\sum\limits_{p=2}^\infty (p-1)^{-2}$, $c > 0$, and are thus themselves convergent.

It remains to establish the convergence of the double infinite sums $\re \i(p,q)$  $(p\geq 2, q\leq -2)$ and $\re \i(p,q)$ $(p,q\geq 2)$. This, however, can be done by applying essentially the same arguments as above for the single infinite sums. One thus obtains
\begin{align*}
\abs{\re \i(p,q)} \leq \frac{c}{(p-1)^2(q-1)^2}, \quad c > 0,
\end{align*}
for either range of parameters $p$ and $q$. Hence, the double infinite series in \eqref{sums} are also convergent.

A numerical evaluation of the first 1,000 terms in $\sum\limits_{p,q\in\z} \i(p,q)$ gives a value of approximately 0.000160507. Hence, 
\[
\iint_{\R^2}\sum_{k,l\in\z}\t\p_2(x,y)\ dx\,dy = C_{\p_2},
\]
where $C_{\p_2}\approx 0.000160507/\pi^2$.
\vskip 10pt

To verify the recursive formulas for the functions $L_n$, note that for $n\geq 1$, we have that
\begin{align*}
\sum_{k,l\in\z}\t\p_{n+1}(x,y)&=\sum_{k,l\in\z}e^{\pi i(lx-ky)}\p_{n+1}(x-k,y-l)\\
&=\sum_{k,l\in\z}e^{\pi i(lx-ky)}\int_{u=0}^1\int_{v=0}^1\p_n(x-k-u,y-l-v)e^{\pi i(u(y-l)-v(x-k))}\ dv\,du\\
&=\iint_Qe^{\pi i(uy-vx)}\sum_{k,l\in\z}e^{\pi i(l(x-u)-k(y-v))}\p_n(x-u-k,y-v-l)\ du\,dv\\
&=\iint_Qe^{\pi i(uy-vx)}\sum_{k,l\in\z}\t\p_n(x-u,y-v)\ du\,dv,
\end{align*}
and, therefore,
\begin{align*}
\iint_{\R^2}\sum_{k,l\in\z}\t\p_{n+1}(x,y)\ dx\,dy&=\iint_Q\bigg(\iint_{\R^2}e^{\pi i(uy-vx)}\sum_{k,l\in\z}\t\p_n(x-u,y-v)\ dx\,dy\bigg)du\,dv\\
&=:\iint_Q L_n(u,v)\ du\,dv.
\end{align*}
Thus, for $n\geq 1$,
\begin{align*}
L_{n+1}(u,v)&=\iint_{\R^2}e^{\pi i(uy-vx)}\sum_{k,l\in\z}\t\p_{n+1}(x-u,y-v)\ dx\,dy\\
&=\iint_{\R^2}e^{\pi i(uy-vx)}\sum_{k,l\in\z}e^{\pi i(l(x-u)-k(y-v))}\p_{n+1}(x-u-k,y-v-l)\ dx\,dy\\
&=\iint_{\R^2}e^{\pi i(uy-vx)}\sum_{k,l\in\z}e^{\pi i(l(x-u)-k(y-v))}\bigg(\iint_Q\p_n(x-u-k-s,y-v-l-t)\ \times\\
&\hspace{9 cm}e^{\pi i(s(y-v-l)-t(x-u-k))}\ ds\,dt\bigg)dx\,dy.
\end{align*}
Now, applying Fubini's theorem yields
\begin{align*}
L_{n+1}(u,v)&=\iint_Q\sum_{k,l\in\z}\bigg(\iint_{\R^2}e^{\pi i(uy-vx)}e^{\pi i(s(y-v)-t(x-u))}e^{\pi i(l(x-u-s)-k(y-v-t))}\ \times\\
&\hspace{4 cm}\p_n(x-u-k-s,y-v-l-t)\ dx\,dy\bigg)dsdt\\
&=\iint_Q\sum_{k,l\in\z}\bigg(\iint_{\R^2}e^{\pi i(uy-vx)}e^{\pi i(s(y-v)-t(x-u))}\t\p_n(x-u-s,y-v-t)\ dx\,dy\bigg)ds\,dt\\
&=\iint_Qe^{\pi i(ut-vs)}\bigg(\iint_{\R^2}e^{\pi i((u+s)y-(v+t)x)}\sum_{k,l\in\z}\t\p_n(x-u-s,y-v-t)\ dx\,dy\bigg)ds\,dt\\
&=\iint_Qe^{\pi i(ut-vs)}L_n(u+s,v+t)\ ds\,dt.
\end{align*}
\end{proof}
\section{System of Twisted Translates of $\p_n$}
First, we prove that $\{\t\p_1:k,l\in\z\}$ forms an orthonormal system in $L^2(\R^2)$. Consider 
\begin{align*}
\big\langle\t\p_1,T_{(k^\prime,l^\prime)}^t\p_1\big\rangle_{L^2(\R^2)}&=\iint_{\R^2}\t\p_1(x,y)\ \overline{T_{(k^\prime,l^\prime)}^t\p_1(x,y)}\ dxdy\\
&=\iint_{\R^2}e^{\pi i(x(l-l^\prime)-y(k-k^\prime))}\p_1(x-k,y-l)\ \overline{\p_1(x-k^\prime,y-l^\prime)}\ dxdy.
\end{align*}
Applying a change of variables, we get
\begin{align*}
\big\langle\t\p_1,T_{(k^\prime,l^\prime)}^t\p_1\big\rangle_{L^2(\R^2)}&=\iint_{\R^2}e^{\pi i(k^\prime l-l^\prime k)}e^{\pi i(x(l-l^\prime)-y(k-k^\prime))}\p_1(x,y)\ \overline{\p_1(x+k-k^\prime,y+l-l^\prime)}\ dxdy\\
&=(-1)^{k^\prime l-l^\prime k}\int_{x=0}^1\int_{y=0}^1e^{\pi i(x(l-l^\prime)-y(k-k^\prime))}\x_{[0,1)}(x+k-k^\prime)\ \times\\
&\hspace{9 cm} \x_{[0,1)}(y+l-l^\prime)dydx\\
&=(-1)^{k^\prime l-l^\prime k}\int_{x=0}^1\int_{y=0}^1e^{\pi i(x(l-l^\prime)-y(k-k^\prime))}\x_{[k^\prime-k,k^\prime-k+1)}(x)\ \times\\
&\hspace{9 cm} \x_{[l^\prime-l,l^\prime-l+1)}(y)\ dydx\\
&=(-1)^{k^\prime l-l^\prime k}\ \delta_{k,k^\prime}\ \delta_{l,l^\prime},
\end{align*}
proving that $\{\t\p_1:k,l\in\z\}$ forms an orthonormal system.
\begin{theorem}\label{c}
Let $\p_2$ denote the second order twisted $B$-spline. Then $\{\t\p_2:k,l\in\z\}$ is a {\it Riesz} basis for the twisted shift-invariant space $V^t(\p_2)$.
\end{theorem}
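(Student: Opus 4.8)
The target statement is equivalent to saying that the system $\{\t\p_2 : k,l\in\z\}$ is a \emph{Riesz sequence} in $L^2(\R^2)$, since by definition $V^t(\p_2)=\overline{\Span\{\t\p_2 : k,l\in\z\}}$. The plan is to set up the Gramian $G$ of this system on $\ell^2(\z^2)$ and, via the criterion recalled in Section~2, reduce the claim to the two-sided bound $A\|c\|^2\le\langle Gc,c\rangle\le B\|c\|^2$ for finitely supported $c$. Using the properties of twisted translation from Section~2 (namely $(\t)^*=T^t_{(-k,-l)}$ and $T^t_{(k_1,l_1)}T^t_{(k_2,l_2)}=e^{-\pi i(k_1l_2-l_1k_2)}T^t_{(k_1+k_2,l_1+l_2)}$) one computes
\[
\big\langle \t\p_2,\,T^t_{(k',l')}\p_2\big\rangle=(-1)^{k'l-l'k}\,a_{k-k',\,l-l'},\qquad a_{m,n}:=\big\langle T^t_{(m,n)}\p_2,\p_2\big\rangle .
\]
Because $\supp\p_2=[0,2]\times[0,2]$, the integrand defining $a_{m,n}$ is supported in $([0,2]^2)\cap([m,m+2]\times[n,n+2])$, which has positive measure only when $|m|\le1$ and $|n|\le1$; hence $a_{m,n}=0$ off $\{-1,0,1\}^2$ and $G$ is banded. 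The upper (Bessel) bound $\langle Gc,c\rangle\le B\|c\|^2$ then follows immediately, since each row of $G$ has at most nine nonzero entries, all bounded by $\max_{m,n}|a_{m,n}|$. All the difficulty lies in the lower bound.

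For the lower bound I would transfer the problem through the Weyl transform, which is the natural setting here. Since $W$ is an isometry of $L^2(\R^2)$ onto $\b$ with $W(\t\p_2)=\pi(k,l)W(\p_2)$, and since $\pi(k,l)$ for $(k,l)\in\z^2$ acts on the kernel $K_{\p_2}(\xi,\eta)$ of $W(\p_2)$ as a (phased) integer time--frequency shift in the variable $\xi$, a Zak transform $Z_\xi$ applied in the first argument of $K_{\p_2}$ diagonalizes the whole family. This yields the fiberization
\[
\Big\|\sum_{k,l}c_{k,l}\t\p_2\Big\|_{L^2(\R^2)}^2=\int_{[0,1)^2}\big|m_c(t,w)\big|^2\,\Theta(t,w)\,dt\,dw,\qquad
\Theta(t,w):=\int_\R\big|(Z_\xi K_{\p_2})(t,w,\eta)\big|^2\,d\eta ,
\]
where $m_c(t,w)=\sum_{k,l}c_{k,l}e^{\pi ikl}e^{2\pi i(kt-lw)}$, and $\{e^{\pi ikl}e^{2\pi i(kt-lw)}\}_{k,l\in\z}$ is an orthonormal basis of $L^2([0,1)^2)$, so that $\|m_c\|_{L^2([0,1)^2)}=\|c\|_{\ell^2(\z^2)}$. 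Consequently $\{\t\p_2\}$ is a Riesz sequence, with bounds $A$ and $B$, as soon as $0<A\le\Theta(t,w)\le B<\infty$ for a.e.\ $(t,w)\in[0,1)^2$.

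It then remains to estimate the single scalar fiber function $\Theta$, for which the explicit form of $K_{\p_2}$ is available from $W(\p_2)=W(\p_1)^2$ and the recursion for $K_{\p_n}$ proved above. The bound $\Theta\le B$ is routine: $K_{\p_2}(\cdot,\eta)$ is bounded, supported in the strip $\{0\le\eta-\xi\le2\}$, and decays in $\xi$, so for fixed $t$ only boundedly many $n$ contribute to $(Z_\xi K_{\p_2})(t,w,\eta)$ and the $\eta$-integral converges uniformly. The lower bound $\Theta\ge A>0$ is the main obstacle. Pointwise positivity is plausible: on a window $\eta\in(t+m,t+m+1)$ only the terms $n=m$ and $n=m-1$ survive, so $\Theta(t,w)=0$ would force $K_{\p_2}(t+m-1,\cdot)=-e^{2\pi iw}K_{\p_2}(t+m,\cdot)$ on that window for every $m$; but these are the restrictions of $K_{\p_2}$ to the two distinct strips $\eta-\xi\in[1,2]$ and $\eta-\xi\in[0,1]$, on which $\p_2$ (hence $K_{\p_2}$) has genuinely different functional forms, cf.\ \eqref{1}, so they are not scalar multiples of one another and $\Theta$ vanishes nowhere. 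Turning this into a \emph{uniform} positive lower bound is exactly where the ``twist'' makes itself felt: classically the analogous quantity for $B_2\otimes B_2$ factors as $\big(\tfrac23+\tfrac13\cos\theta\big)\big(\tfrac23+\tfrac13\cos\phi\big)$, two elementary trigonometric polynomials bounded below by $\tfrac13$, whereas here $\Theta$ is a single transcendental function in which the same sine/cosine-integral constants that appeared in the computation of $\iint\p_2$ and of $C_{\p_2}$ reappear. I therefore expect the uniform lower bound to be extracted either from a quantitative version of the non-proportionality argument above or, in the spirit of the numerical evaluations already used in this paper, by estimating $\inf_{t,w}\Theta(t,w)$ directly; equivalently, one may diagonalize the banded twisted Gramian $G$ into a $4\times4$ Hermitian matrix symbol and check that its smallest eigenvalue stays bounded away from $0$ on the torus.
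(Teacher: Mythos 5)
Your reduction to the Riesz-sequence property, the computation of the Gramian entries as $(-1)^{k'l-l'k}a_{k-k',l-l'}$ with $a_{m,n}=\langle T^t_{(m,n)}\p_2,\p_2\rangle$, and the observation that the band structure ($a_{m,n}=0$ unless $|m|,|n|\le 1$) immediately yields the upper bound, all match the paper's setup. But the lower bound is the entire content of the theorem, and your proposal does not actually prove it: you set up a Zak-transform fiberization $\|\sum c_{k,l}\t\p_2\|^2=\int_{[0,1)^2}|m_c|^2\,\Theta$, argue heuristically that $\Theta$ should vanish nowhere because $K_{\p_2}(t+m-1,\cdot)$ and $K_{\p_2}(t+m,\cdot)$ live on different strips of \eqref{1} and so ``are not scalar multiples of one another,'' and then list three possible ways one \emph{might} extract a uniform bound. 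None of these is carried out. The non-proportionality claim is asserted rather than verified (it requires checking, for every $t\in[0,1)$ and the relevant $m$, that the two explicit restrictions of $K_{\p_2}$ are not unimodular multiples of each other), and even granting pointwise positivity you would still need continuity of $\Theta$ on the compact torus to conclude $\inf\Theta>0$ --- a step you flag as ``exactly where the twist makes itself felt'' but do not perform. Be aware also that the authors explicitly remark after the theorem that the fiberization-type characterization of twisted Riesz sequences from the literature requires a ``condition $C$'' that $\p_2$ fails; your $\xi$-only Zak reduction appears to sidestep that (it uses only unitarity of the Zak transform applied to the first kernel variable), but that is precisely the kind of point that must be justified rather than assumed.

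The paper takes a much more elementary route to the lower bound: it splits $\langle G\{c_{k,l}\},\{c_{k,l}\}\rangle$ into the nine sums corresponding to the band $\{-1,0,1\}^2$, writes each off-diagonal contribution as an explicit integral against the closed form \eqref{1} of $\p_2$, evaluates those integrals numerically ($|I_1|\approx 0.708$, $|I_3|\approx 2.059$, $|I_7|\approx 0.987$, $I_9\approx 14.366$), and concludes by diagonal dominance: $I_9-(2|I_1|+4|I_3|+2|I_7|)\approx 2.74>0$. This is essentially your last suggested alternative (controlling the smallest eigenvalue of the banded Gramian symbol), executed in its crudest Gershgorin form. Your fiberization is an attractive and, as far as the algebra goes, correct framework that would give sharp bounds $A=\operatorname{ess\,inf}\Theta$, $B=\operatorname{ess\,sup}\Theta$; but as written it replaces the one quantitative step that actually needs to be done with a statement of expectation, so the proof is incomplete.
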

\begin{proof}
Recall that $V^t(\p_2)=\overline{\Span\ \{\t\p_2:k,l\in\z\}}$. Hence is enough to prove that the collection $\{\t\p_2:k,l\in\z\}$ is a {\it Riesz} sequence. Let $G$ denote the Gramian associated with the system $\{\t\p_2:k,l\in\z\}$ and let $\{c_{k,l}\}\in\ell^2(\z^2)$. We need to prove that there exist $A,B>0$ such that
\begin{align*}
A\|\{c_{k,l}\}\|_{\ell^2(\z^2)}^2\leq \big\langle G\{c_{k,l}\},\{c_{k,l}\}\big\rangle_{\ell^2(\z^2)}\leq B\|\{c_{k,l}\}\|_{\ell^2(\z^2)}^2.
\end{align*}
Consider
\begin{align*}
\big\langle G\{c_{k,l}\},\{c_{k,l}\}\big\rangle_{\ell^2(\z^2)}&=\sum_{k,k^\prime,l,l^\prime\in\z}c_{k,l}\ \overline{c_{k^\prime,l^\prime}}\ \big\langle\t\p_2,T_{(k^\prime,l^\prime)}^t\p_2\big\rangle_{L^2(\R^2)}\\
&=\sum_{k,k^\prime,l,l^\prime\in\z}c_{k,l}\overline{c_{k^\prime,l^\prime}}\iint_{\R^2}e^{\pi i(x(l-l^\prime)-y(k-k^\prime))}\p_2(x-k,y-l)\ \overline{\p_2(x-k^\prime,y-l^\prime)}\ dxdy\\
&=\sum_{k,k^\prime,l,l^\prime\in\z}c_{k,l}\overline{c_{k^\prime,l^\prime}}(-1)^{k^\prime l-l^\prime k}\iint_{\R^2}e^{\pi i(x(l-l^\prime)-y(k-k^\prime))}\p_2(x+k^\prime-k,y+l^\prime-l)\ \times\\
&\hspace{11 cm}\overline{\p_2(x,y)}\ dxdy,
\end{align*}
after applying a change of variables. As the support of $\p_2=[0,2]\times[0,2]$, it can be easily shown that in the sum for $k,l\in\z$ only three values of $k$ and $l$ survive. Namely,
\begin{align*}
&k=k^\prime-1,\ k^\prime,\ k^\prime+1,\\
&l=l^\prime-1,\ l^\prime,\ l^\prime+1.
\end{align*}
Thus, we can write
\begin{align*}
\big\langle G\{c_{k,l}\},\{c_{k,l}\}\big\rangle_{\ell^2(\z^2)}&=S_1+S_2+\cdots+S_9,
\end{align*}
depending upon the above choices of $k$ and $l$.
Now
\begin{align*}
S_1&=\sum_{k^\prime,l^\prime\in\z}c_{k^\prime-1,l^\prime-1}\ \overline{c_{k^\prime,l^\prime}}\ e^{\pi i(l^\prime-k^\prime)}\int_{x=0}^{2}\int_{y=0}^{2}e^{\pi i(y-x)}\p_2(x+1,y+1)\p_2(x,y)\ dydx.
\end{align*}
In the integral limits, only $(x,y)\in(0,1]\times(0,1]$ contributes and $(x,y)\in(0,1]\times(1,2]$, $(x,y)\in(1,2]\times(0,1]$ and $(x,y)\in(1,2)\times(1,2)$ can not contribute using the supports of intersection of $\p_2(\cdot+1,\cdot+1)$ and $\p_2(\cdot,\cdot)$. Thus, from \eqref{1}, we obtain
\begin{align*}
S_1&=\sum_{k^\prime,l^\prime\in\z}c_{k^\prime-1,l^\prime-1}\ \overline{c_{k^\prime,l^\prime}}\ e^{\pi i(l^\prime-k^\prime)}\frac{4}{\pi^4}\int_{x=0}^1\int_{y=0}^1\frac{e^{\pi i(y-x)}}{xy(x+1)(y+1)}[\cos(\pi(x-y))-\cos(\pi(1-xy))]\times\\
&\hspace{12 cm} (1-\cos(\pi xy))\ dy\,dx.
\end{align*}
In a similar way, we get that
\begin{align*}
S_2&=\sum_{k^\prime,l^\prime\in\z}\overline{c_{k^\prime-1,l^\prime-1}}\ c_{k^\prime,l^\prime}\ e^{-\pi i(l^\prime-k^\prime)}\int_{x=0}^{1}\int_{y=0}^{1}e^{-\pi i(y-x)}\p_2(x+1,y+1)\p_2(x,y)\ dy\,dx\\
&=\overline{S_1},
\end{align*}
\begin{align*}
S_3&=\sum_{k^\prime,l^\prime\in\z}c_{k^\prime-1,l^\prime}\ \overline{c_{k^\prime,l^\prime}}\ e^{\pi il^\prime}\iint_{\R^2}e^{\pi iy}\p_2(x+1,y)\ \overline{\p_2(x,y)}\ dx\,dy\\
&=\sum_{k^\prime,l^\prime\in\z}c_{k^\prime-1,l^\prime}\ \overline{c_{k^\prime,l^\prime}}\ e^{\pi il^\prime}\frac{4}{\pi^4}\bigg(\int_{x=0}^1\int_{y=0}^1\frac{e^{\pi iy}}{x(x+1)y^2}(\cos(\pi xy)-\cos(\pi y))(1-\cos(\pi xy))\ dy\,dx\ +\\
&\hspace{2 mm}\int_{x=0}^1\int_{y=1}^2\frac{e^{\pi iy}}{x(x+1)y^2}[\cos(\pi (x-y+1))-\cos(x-xy+1)][\cos(\pi x(y-1))-\cos(\pi x)]\ dy\,dx\bigg),
\end{align*}
\begin{align*}
S_4&=\sum_{k^\prime,l^\prime\in\z}c_{k^\prime+1,l^\prime}\ \overline{c_{k^\prime,l^\prime}}\ e^{-\pi il^\prime}\iint_{\R^2}e^{-\pi iy}\p_2(x-1,y)\ \overline{\p_2(x,y)}\ dx\,dy\\
&=\sum_{k^\prime,l^\prime\in\z}c_{k^\prime,l^\prime}\ \overline{c_{k^\prime-1,l^\prime}}\ e^{-\pi il^\prime}\iint_{\R^2}e^{-\pi iy}\p_2(x,y)\ \overline{\p_2(x+1,y)}\ dx\,dy\\
&=\overline{S_3},
\end{align*}
\begin{align*}
S_5&=\sum_{k^\prime,l^\prime\in\z}c_{k^\prime,l^\prime-1}\ \overline{c_{k^\prime,l^\prime}}\ e^{-\pi ik^\prime}\iint_{\R^2}e^{-\pi ix}\p_2(x,y+1)\ \overline{\p_2(x,y)}\ dx\,dy\\
&=\sum_{k^\prime,l^\prime\in\z}c_{k^\prime,l^\prime-1}\ \overline{c_{k^\prime,l^\prime}}\ e^{-\pi ik^\prime}\frac{4}{\pi^4}\bigg(\int_{x=0}^1\int_{y=0}^1\frac{e^{-\pi ix}}{x^2y(y+1)}[\cos(\pi xy)-\cos(\pi x)][1-\cos(\pi xy)]\ dy\,dx\ +\\
&\int_{x=1}^2\int_{y=0}^1\frac{e^{-\pi ix}}{x^2y(y+1)}[\cos(\pi (x-y-1))-\cos(\pi(y-xy+1))][\cos(\pi (x-1)y)-\cos(\pi y)]\ dy\,dx\bigg).
\end{align*}
Notice that the integrand in $S_5$ is same as the integrand in $\overline{S_3}$ by interchanging the roles of $x$ and $y$.
\begin{align*}
S_6&=\sum_{k^\prime,l^\prime\in\z}c_{k^\prime,l^\prime+1}\ \overline{c_{k^\prime,l^\prime}}\ e^{\pi ik^\prime}\iint_{\R^2}e^{\pi ix}\p_2(x,y-1)\ \overline{\p_2(x,y)}\ dx\,dy\\
&=\sum_{k^\prime,l^\prime\in\z}c_{k^\prime,l^\prime}\ \overline{c_{k^\prime,l^\prime-1}}\ e^{\pi ik^\prime}\iint_{\R^2}e^{\pi ix}\p_2(x,y)\ \overline{\p_2(x,y+1)}\ dx\,dy\\
&=\overline{S_5}.
\end{align*}
Notice that the integrand in $S_6$ is same as the integrand in $S_3$ with $x$ and $y$ interchanged.
\begin{align*}
S_7&=\sum_{k^\prime,l^\prime\in\z}c_{k^\prime-1,l^\prime+1}\ \overline{c_{k^\prime,l^\prime}}\ e^{\pi i(k^\prime+l^\prime)}\iint_{\R^2}e^{\pi i(x+y)}\p_2(x+1,y-1)\ \overline{\p_2(x,y)}\ dx\,dy\\
&=\sum_{k^\prime,l^\prime\in\z}c_{k^\prime-1,l^\prime+1}\ \overline{c_{k^\prime,l^\prime}}\ e^{\pi i(k^\prime+l^\prime)}\frac{4}{\pi^4}\int_{x=0}^1\int_{y=1}^2\frac{e^{\pi i(x+y)}}{x(x+1)y(y-1)}[\cos(\pi x(y-1))-\cos(\pi(y-1))]\ \times\\
&\hspace{10 cm}[\cos(\pi x(y-1))-\cos(\pi x)]\ dy\,dx.
\end{align*}
\begin{align*}
S_8&=\sum_{k^\prime,l^\prime\in\z}c_{k^\prime+1,l^\prime-1}\ \overline{c_{k^\prime,l^\prime}}\ e^{-\pi i(k^\prime+l^\prime)}\iint_{\R^2}e^{-\pi i(x+y)}\p_2(x-1,y+1)\ \overline{\p_2(x,y)}\ dx\,dy\\
&=\sum_{k^\prime,l^\prime\in\z}c_{k^\prime,l^\prime}\ \overline{c_{k^\prime-1,l^\prime+1}}\ e^{-\pi i(k^\prime+l^\prime)}\iint_{\R^2}e^{-\pi i(x+y)}\p_2(x,y)\ \overline{\p_2(x+1,y-1)}\ dx\,dy\\
&=\overline{S_7}.
\end{align*}
\begin{align*}
S_9&=\sum_{k^\prime,l^\prime\in\z}|c_{k^\prime,l^\prime}|^2\iint_{\R^2}|\p_2(x,y)|^2\ dxdy\\
&=\sum_{k^\prime,l^\prime\in\z}|c_{k^\prime,l^\prime}|^2\frac{4}{\pi^4}\bigg(\int_{x=0}^1\int_{y=0}^1\frac{1}{x^2y^2}[1-\cos(\pi xy)]^2\ dy\,dx\ +\ \int_{x=0}^1\int_{y=1}^2\frac{1}{x^2y^2}[\cos(\pi x(y-1))-\\
&\hspace{4 cm}\cos(\pi x)]^2\ dy\,dx\ +\ \int_{x=1}^2\int_{y=0}^1\frac{1}{x^2y^2}[\cos(\pi (x-1)y)-\cos(\pi y)]^2\ dy\,dx\ +\\
&\hspace{5 cm}\int_{x=1}^2\int_{y=1}^2\frac{1}{x^2y^2}[\cos(\pi (x-y))-\cos(\pi (x+y-xy))]^2\ dy\,dx\bigg).
\end{align*}
Thus,
\begin{align*}
\big\langle G\{c_{k,l}\},\{c_{k,l}\}\big\rangle_{\ell^2(\z^2)}&=\sum_{k^\prime,l^\prime\in\z}c_{k^\prime-1,l^\prime-1}\ \overline{c_{k^\prime,l^\prime}}\ e^{\pi i(l^\prime-k^\prime)}\frac{1}{\pi^4}\ I_1\ +\ \overline{S_1}\ +\ \sum_{k^\prime,l^\prime\in\z}c_{k^\prime-1,l^\prime}\ \overline{c_{k^\prime,l^\prime}}\ e^{\pi il^\prime}\frac{1}{\pi^4}\ I_3\ +\\
&\overline{S_3}\ +\ \sum_{k^\prime,l^\prime\in\z}c_{k^\prime,l^\prime-1}\ \overline{c_{k^\prime,l^\prime}}\ e^{-\pi ik^\prime}\frac{1}{\pi^4}\ I_5\ +\ \overline{S_5}\ +\ \sum_{k^\prime,l^\prime\in\z}c_{k^\prime-1,l^\prime+1}\ \overline{c_{k^\prime,l^\prime}}\ e^{\pi i(k^\prime+l^\prime)}\frac{1}{\pi^4}\ I_7\\
&\hspace{7 cm}+\ \overline{S_7}\ +\ \sum_{k^\prime,l^\prime\in\z}|c_{k^\prime,l^\prime}|^2\frac{1}{\pi^4}\ I_9.
\end{align*}
As the values of the integral $I_5$ and $I_6$ are same as the integral in $\overline{S_3}$ and $I_3$ respectively, it is enough if one evaluates the integrals $I_1,I_3,I_7$ and $I_9$. These integrals are numerically computed and the values are the following:
\begin{align*}
I_1&=0.531003-i\ 0.467628\\
I_3&=-1.97877+i\ 0.56791\\
I_7&=0.906616-i\ 0.390131\\
I_9&=14.3661.
\end{align*}
Hence $|I_1|=0.707559,|I_3|=2.05865$ and $|I_7|=0.986993$. Now
\begin{align*}
\big\langle G\{c_{k,l}\},\{c_{k,l}\}\big\rangle_{\ell^2(\z^2)}&\leq \frac{1}{\pi^4}\|\{c_{k,l}\}\|_{\ell^2(\z^2)}^2(2|I_1|+4|I_3|+2|I_7|+I_9)\\
&=\frac{1}{\pi^4}\|\{c_{k,l}\}\|_{\ell^2(\z^2)}^2(11.6237+14.3661)\\
&=\frac{25.9898}{\pi^4}\|\{c_{k,l}\}\|_{\ell^2(\z^2)}^2.
\end{align*}
On the other hand
\begin{align*}
\big\langle G\{c_{k,l}\},\{c_{k,l}\}\big\rangle_{\ell^2(\z^2)}&\geq\frac{1}{\pi^4}\|\{c_{k,l}\}\|_{\ell^2(\z^2)}^2\{I_9-(2|I_1|+4|I_3|+2|I_7|)\}\\
&=\frac{1}{\pi^4}\|\{c_{k,l}\}\|_{\ell^2(\z^2)}^2(14.3661-11.6237)\\
&=\frac{2.7424}{\pi^4}\|\{c_{k,l}\}\|_{\ell^2(\z^2)}^2.
\end{align*}
Thus $\{\t\p_2:k,l\in\z\}$ is a Riesz sequence. 
\end{proof}
We conjecture that $\{\t\p_n:k,l\in\z\}$ is also a Riesz sequence for any twisted $B$-spline of order $n$. In fact, we show that there exists a constant $B>0$ such that
\begin{align*}
\bigg\|\sum_{k,l\in\z}c_{k,l}\t\p_n\bigg\|_{L^2(\R^2)}^2\leq B\|\{c_{k,l}\}\|_{\ell^2(\z^2)}^2\ .
\end{align*}
The proof follows by induction on $n$. By Theorem \ref{c}, the result is true for $n=2$. Assume that the result is true for $n$. We need to show the result for $n+1$. Consider a finite sequence $\{c_{k,l}\}$ in $\ell^2(\z^2)$ and compute
\begin{align*}
\bigg\|\sum_{k,l\in\z}c_{k,l}\t\p_{n+1}\bigg\|_{L^2(\R^2)}^2&=\bigg\|\sum_{k,l\in\z}c_{k,l}W(\t\p_{n+1})\bigg\|_{\b(L^2(\R))}^2\\
&=\bigg\|\sum_{k,l\in\z}c_{k,l}\pi(k,l)W(\p_{n+1})\bigg\|_{\b(L^2(\R))}^2\\
&=\bigg\|\sum_{k,l\in\z}c_{k,l}\pi(k,l)W(\p_n)W(\p_1)\bigg\|_{\b(L^2(\R))}^2\\
&=\bigg\|\Big(\sum_{k,l\in\z}c_{k,l}\pi(k,l)W(\p_n)\Big)W(\p_1)\bigg\|_{\b(L^2(\R))}^2\\
&\leq\bigg\|\sum_{k,l\in\z}c_{k,l}\pi(k,l)W(\p_n)\bigg\|_{\b(L^2(\R))}^2\|W(\p_1)\|_{\b(L^2(\R))}^2\\
&=\bigg\|\sum_{k,l\in\z}c_{k,l}W(\t\p_n)\bigg\|_{\b(L^2(\R))}^2\|\p_1\|_{L^2(\c)}^2\\
&=\bigg\|\sum_{k,l\in\z}c_{k,l}\t\p_n\bigg\|_{L^2(\c)}^2\\
&\leq B\|\{c_{k,l}\}\|_{\ell^2(\z^2)}^2.
\end{align*}
So we obtain an upper bound.\\

The conjecture is mainly for obtaining the lower bound. In the classical case of classical splines $B_n$  on $\R$, the result follows using the fact that $\{T_kB_n:k\in\z\}$ is a {\it Riesz} sequence iff there exist constants $A,B>0$ such that
\begin{align*}
A\leq\sum_{k\in\z}|\widehat{\p}(\xi+k)|^2\leq B,\quad\text{for a.e. $\xi\in\R$}.
\end{align*}
In the case of twisted shift-invariant spaces, we have a similar result (see Theorem $5.1$ in \cite{saswatajmaa}) but under the requirement that ``$\p$ satisfies condition $C$''. However, the splines $\p_n$ do not satisfy this condition for $n\geq 2$. This can be easily checked for $\p_2$. Thus, we leave the conjecture to the interested reader.
\section{Twisted Haar Wavelets}
Recall that $\phi_1(x,y)=\x_{[0,1)}(x)\x_{[0,1)}(y)$ and $\{\t\p_1:k,l\in\z\}$ is an orthonormal system. Our aim is to obtain a wavelet $\psi_1$ by treating $\p_1$ as a scaling function generating a {\it multiresolution analysis} on $\c=\R^2$. We shall call the resulting wavelet $\psi_1$ a twisted Haar wavelet. Using the twisted translation and dilation on $\R^2$, we shall define a $\mra$ in $L^2(\R^2)$. In fact,  a {\it multiresolution analysis} has been studied for the {\it Heisenberg} group $\mathbb{H}$ in \cite{azitamra}. Keeping in mind this {\it multiresolution analysis} for $L^2(\R)$ and $L^2(\mathbb{H})$, we provide the following definition for $L^2(\R^2)$.\\

We recall the definition of twisted translation of a function $\p\in L^2(\R^2)$ from \eqref{11}. The dilation is defined to be the unitary operator on $L^2(\R^2)$ given as follows. For $a>0$, $D_af(x,y):=af(ax,ay)$, for $x,y\in\R$.
\begin{definition}[MRA]\label{a}
We say that a sequence of closed subspaces $\{V_j\}_{j\in\z}$ of $L^2(\R^2)$ forms a \emph{twisted multiresolution analysis} for $L^2(\R^2)$ if the following conditions are satisfied.\\

\begin{enumerate}[(i)]
\item $V_j\subset V_{j+1}$, $\forall\ j\in\z$.\\

\item $\overline{\bigcup\limits_{j\in\z} V_j}=L^2(\R^2)$ and $\bigcap\limits_{j\in\z}V_j=\{0\}$.\\

\item $f\in V_j\quad\Longleftrightarrow\quad D_2f\in V_{j+1},\ \forall\ j\in\z$.\\

\item There exists a function $\p\in V_0$ such that $\{\t\p:k,l\in\z\}$ forms an orthonormal basis for $V_0$. The function $\p$ is called the scaling function associated with the \emph{twisted multiresolution analysis}.
\end{enumerate}
\end{definition}
Let $W_0$ be the orthogonal complement of $V_0$ in $V_1$. As in the classical case, if we can find a $\psi\in W_0$ such that $\{\t\psi:k,l\in\z\}$ forms an orthonormal basis for $W_0$ then by defining $W_j$ to be the orthogonal complement of $V_j$ in $V_{j+1}$, one can show that $L^2(\R^2)=\bigoplus\limits_{j\in\z}W_j$.
\begin{definition}
Let $\psi\in L^2(\R^2)$. If $\{D_{2^j}\t\psi:j,k,l\in\z\}$ is an orthonormal basis for $L^2(\R^2)$, then $\psi$ is called a \emph{twisted wavelet} for $L^2(\R^2)$.
\end{definition}
Now, our aim is to find $\psi$ explicitly for the scaling function $\p_1$ using the \emph{twisted multiresolution analysis}. Towards this end, we prove the following result.
\begin{theorem}\label{m}
Let $\p\in L^2(\R^2)$ such that
\begin{enumerate}
\item[\emph{(i)}] $\supp\p\subset[0,1]\times[0,1]$.
\item[\emph{(ii)}] $\{\t\p:k,l\in\z\}$ forms an orthonormal system.
\item[\emph{(iii)}] $\{V_j,\p:j\in\z\}$ generates a \emph{twisted multiresolution analysis} for $L^2(\R^2)$. 
\end{enumerate}

Let $c_{k,l}:=\big\langle D_{1/2}\p,\t\p\big\rangle$. Define
\begin{align*}
\psi:=\sum_{k,l\in\z}(-1)^{k+l}\ \overline{c_{k,l}}\ D_2T_{(-k+1,l)}^t\p.
\end{align*}
Then $\psi$ is a twisted wavelet for $L^2(\R^2)$. In other words $\{D_{2^j}\t\psi:j,k,l\in\z\}$ is an orthonormal basis for $L^2(\R^2)$. 
\end{theorem}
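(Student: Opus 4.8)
The plan is to imitate the classical Mallat--Meyer passage from a multiresolution analysis to a wavelet orthonormal basis, adapted to twisted translations, using hypothesis~(i) to reduce every relevant computation to finite-dimensional linear algebra on the unit square $Q=[0,1]\times[0,1]$; here $\p$ plays the role of a (twisted) Haar scaling function.

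\emph{Step 1 (twisted two-scale relation).} Since $D_2$ is unitary and $D_2V_0=V_1$ by axiom~(iii), the family $\{D_2\t\p:k,l\in\z\}$ is an orthonormal basis of $V_1$. As $\p\in V_0\subset V_1$, expanding $\p$ in this basis gives
\[
\p=\sum_{k,l\in\z}c_{k,l}\,D_2\t\p,\qquad c_{k,l}=\langle\p,D_2\t\p\rangle=\langle D_{1/2}\p,\t\p\rangle,
\]
and Parseval together with (ii) yields $\sum_{k,l}|c_{k,l}|^2=\|\p\|^2=1$. Because $\supp\p\subset Q$ while $\supp(D_2\t\p)\subset[k/2,(k+1)/2]\times[l/2,(l+1)/2]$, one has $c_{k,l}=0$ unless $(k,l)\in\{0,1\}^2$, so the refinement relation is a four-term identity, the twisted counterpart of the Haar refinement equation. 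Re-indexing $m=1-k$, $n=l$ shows that $\psi$ is a finite linear combination of the orthonormal vectors $\{D_2T_{(m,n)}^t\p:(m,n)\in\{0,1\}^2\}$; hence $\psi\in V_1$ and $\|\psi\|^2=\sum|c_{k,l}|^2=1$.

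\emph{Step 2 ($\psi\perp V_0$ and orthonormality of the twisted translates of $\psi$).} Since $V_0$ is twisted shift-invariant and the operators $T_{(k,l)}^t$ obey $T_{(k_1,l_1)}^tT_{(k_2,l_2)}^t=e^{-\pi i(k_1l_2-l_1k_2)}T_{(k_1+k_2,l_1+l_2)}^t$, it is enough to compute finitely many inner products of $\psi$ and of its twisted translates against the $\t\p$. Expanding these in the orthonormal basis $\{D_2T_{(m,n)}^t\p\}$ of $V_1$ and exploiting the sign pattern $(-1)^{k+l}$ together with the index reflection $k\mapsto1-k$ built into $\psi$, the resulting sums should cancel in conjugate pairs (the twisted quadrature-mirror-filter identity), giving $\langle\psi,\t\p\rangle=0$ for all $(k,l)$ and $\langle\t\psi,T_{(k',l')}^t\psi\rangle=\delta_{k,k'}\delta_{l,l'}$, the normalisation coming from $\sum|c_{k,l}|^2=1$. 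The first genuinely technical point is that, in contrast with the classical situation, $D_2$ does \emph{not} carry twisted translates to twisted translates by a clean formula, so one must carefully account for the unimodular phases produced when commuting $D_2$ past $T_{(k,l)}^t$.

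\emph{Step 3 (completeness and telescoping).} It remains to show $V_1=V_0\oplus\overline{\Span\{\t\psi:k,l\in\z\}}$. By (i) this localises to $Q$: there $V_1|_Q$ carries the explicit orthonormal basis consisting of the four quarter-square functions $(D_2T_{(m,n)}^t\p)|_Q$, $(m,n)\in\{0,1\}^2$, while $V_0|_Q=\c\,\p$, and one must verify that the low-pass data $(c_{m,n})$ and the high-pass data $\big((-1)^{m+n}\overline{c_{1-m,n}}\big)$ produce, under twisted shifts, an orthogonal splitting of $V_1|_Q$. Granting completeness, put $W_0:=V_1\ominus V_0$ and $W_j:=D_{2^j}W_0$; since $D_{2^j}$ is unitary with $D_{2^j}W_0=W_j$, the family $\{D_{2^j}\t\psi:k,l\in\z\}$ is an orthonormal basis of $W_j$, and axioms (i)--(ii) give $V_{j+1}=V_j\oplus W_j$ together with $L^2(\R^2)=\overline{\bigcup_jV_j}=\bigoplus_{j\in\z}W_j$, so that $\{D_{2^j}\t\psi:j,k,l\in\z\}$ is an orthonormal basis of $L^2(\R^2)$. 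I expect Step~3 to be the main obstacle: it is precisely where the twisting phases and the dilation must conspire so that the twisted low- and high-pass filter relations close up, and any further implicit hypothesis needed on an admissible scaling function $\p$ would have to be located there.
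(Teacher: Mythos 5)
Your Steps 1 and 2 are in substance the paper's own argument: the paper derives the same four-term two-scale relation (it packages the computation through the Weyl transform, writing $W(D_{1/2}\phi)=m_0W(\phi)$ and $W(D_{1/2}\psi)=m_1W(\phi)$, but the coefficients are your $c_{k,l}$, nonzero only for $k,l\in\{0,1\}$), concludes $\supp\psi\subset[0,1]\times[0,1]$, and then gets all the orthogonality relations much more cheaply than your ``twisted QMF'' plan suggests: since $\psi$ and each $T^t_{(k,l)}\phi$, $T^t_{(k,l)}\psi$ live in single integer cells, every inner product with $(k,l)\neq(0,0)$ vanishes by essential disjointness of supports, orthonormality of $\{T^t_{(k,l)}\psi\}$ follows from $\lVert\psi\rVert=1$ alone, and the only identity that has to be checked is $\langle\phi,\psi\rangle=\sum_{k,l}(-1)^{k+l}c_{k,l}\,c_{1-k,l}=0$, which cancels in pairs. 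In particular, the phase bookkeeping for commuting $D_2$ past $T^t_{(k,l)}$ that you flag as the first technical point is never needed.

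The genuine gap is your Step 3, and your own localization shows that it cannot be closed by any refinement of the outlined argument. Because every generator $D_2T^t_{(k,l)}\phi$ of $V_1$ is supported in a single unit cell, $V_1$ splits as an orthogonal sum over integer cells with exactly \emph{four} orthonormal generators per cell, while $V_0$ contributes one dimension per cell and the twisted translates of the single function $\psi$ contribute at most one more; hence $V_0\oplus\overline{\Span\{T^t_{(k,l)}\psi:k,l\in\mathbb{Z}\}}$ has two dimensions per cell inside the four-dimensional $V_1$, so $\{T^t_{(k,l)}\psi\}$ can never be an orthonormal basis of $W_0=V_1\ominus V_0$ -- exactly the classical phenomenon that a separable two-dimensional MRA with dilation $2$ requires three wavelets, not one. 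Consequently the telescoping $L^2(\R^2)=\bigoplus_jW_j$ cannot yield completeness of $\{D_{2^j}T^t_{(k,l)}\psi\}$ from this single $\psi$; the statement could at best hold vacuously, i.e.\ if no $\phi$ satisfies (i)--(iii) at all (the paper's own remark shows $\phi_1$ already violates (iii)). Be aware that the paper does not close this step either: at precisely this point it only says ``one can verify that $\{T^t_{(k,l)}\psi\}$ is an orthonormal basis for $W_0$'' and then invokes the MRA telescoping, so your suspicion that Step 3 is where the real difficulty (and any missing hypothesis) lies is exactly right.
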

\begin{proof}
Let $V_0:=\overline{\Span\{\t\p:k,l\in\z\}}$ and $V_j:=\overline{\Span\{D_{2^j}\t\p:k,l\in\z\}}$ for $j\in\z$. As $\p\in V_0$, $\p\in V_1$. Then $D_{1/2}\p\in V_0$ and therefore
\begin{align*}
D_{1/2}\p=\sum_{k,l\in\z}c_{k,l}\t\p.
\end{align*}
Taking the {\it Weyl} transform on both sides, we obtain
\begin{align*}
W(D_{1/2}\p)&=\bigg(\sum_{k,l\in\z}c_{k,l}\pi(k,l)\bigg)W(\p)\\
&=: m_0\ W(\p).
\end{align*}
Define
\begin{align*}
m_1:=\sum_{k,l\in\z}(-1)^{k+l}\ \overline{c_{k,l}}\ \pi(-k+1,l)
\end{align*}
and take $W(D_{1/2}\psi)=m_1W(\p)$. Then,
\begin{align*}
W(D_{1/2}\psi)&=\sum_{k,l\in\z}(-1)^{k+l}\ \overline{c_{k,l}}\ W(T_{(-k+1,l)}^t\p).
\end{align*}
Hence,
\begin{align*}
D_{1/2}\psi=\sum_{k,l\in\z}(-1)^{k+l}\ \overline{c_{k,l}}\ T_{(-k+1,l)}^t\p,
\end{align*}
which in turn implies that
\begin{align*}
\psi=\sum_{k,l\in\z}(-1)^{k+l}\ \overline{c_{k,l}}\ D_2T_{(-k+1,l)}^t\p.
\end{align*}
Clearly $\psi\in V_1$. We need to show that $\p\perp\psi$ and $\{\t\p:k,l\in\z\}\perp\{\t\psi:k,l\in\z\}$. Now,
\begin{align*}
c_{k,l}&=\iint_{\R^2}D_{1/2}\p(x,y)\ \overline{\t\p(x,y)}\ dx\,dy\\
&=\frac{1}{2}\iint_{\R^2}\p\big(\frac{x}{2},\frac{y}{2}\big)\ e^{-\pi i(lx-ky)}\ \overline{\p(x-k,y-l)}\ dx\,dy\\
&=2\iint_{\R^2}\p(x,y)\ e^{-2\pi i(lx-ky)}\ \overline{\p(2x-k,2y-l)}\ dx\,dy.
\end{align*}
As $\supp\p\subset[0,1]\times[0,1]$, we obtain
\begin{align*}
c_{k,l}&=2\int_{x=0}^1\int_{y=0}^1\p(x,y)\ \overline{\p(2x-k,2y-l)}\ e^{-2\pi i(lx-ky)}\ dy\,dx\\
&=\frac{1}{2}\int_{x=-k}^{2-k}\int_{y=-l}^{2-l}\p\big(\frac{x+k}{2},\frac{y+l}{2}\big)\ \overline{\p(x,y)}\ e^{-\pi i(lx-ky)}\ dy\,dx,
\end{align*}
by a change of variables. This shows that $c_{k,l}$ can only contribute for $k=0,1$ and $l=0,1$. Thus, it follows that $\supp\psi\subset[0,1]\times[0,1]$. As $\supp\p\subset[0,1]\times[0,1]$, $\supp\t\p\subset[k,k+1]\times[l,l+1]$. Hence, $\big\langle \psi,\t\p\big\rangle=0,\ \forall\ (k,l)\neq(0,0)$. But,
\begin{align}\label{12}
\big\langle \p,\psi\big\rangle=\sum_{k,l\in\z}(-1)^{k+l}\ c_{k,l}\ \big\langle D_{1/2}\p,T_{(-k+1,l)}^t\p\big\rangle.
\end{align}
As $D_{1/2}\p\in V_0$, $D_{1/2}\p=\sum\limits_{k^\prime,l^\prime\in\z}c_{k^\prime,l^\prime}T_{(k^\prime,l^\prime)}^t\p$. Substitution into \eqref{12} yields
\begin{align*}
\big\langle \p,\psi\big\rangle&=\sum_{k,l,k^\prime,l^\prime\in\z}(-1)^{k+l}\ c_{k,l}c_{k^\prime,l^\prime}\ \big\langle T_{(k^\prime,l^\prime)}^t\p,T_{(-k+1,l)}^t\p\big\rangle\\
&=\sum_{k,l\in\z}(-1)^{k+l}\ c_{k,l}c_{-k+1,l}.
\end{align*} 
Now it is straight-forward to show that the right-hand side of the above equation is zero. Hence it follows that the system $\{\t\p:k,l\in\z\}$ is orthogonal to the system $\{\t\psi:k,l\in\z\}$.\\

Let $W_0$ be the orthogonal complement of $V_0$ in $V_1$. Then $\psi\in W_0$. Further, one can verify that $\{\t\psi:k,l\in\z\}$ is an orthonormal basis for $W_0$. Then it follows from the definition of \emph{twisted multiresolution analysis} that the system $\{D_{2^j}\t\psi:j,k,l\in\z\}$ forms an orthonormal basis for $L^2(\R^2)$. 
\end{proof}

\begin{remark}
Let $\p=\p_1$. We have already shown that $\{\t\p_1:k,l\in\z\}$ is an orthonormal system. Let $V_0:=\overline{\Span\{\t\p_1:k,l\in\z\}}$. However $V_0\not \subset V_1$. In fact, $e^{-\pi iy}\chi_{[1,2]\times [0,1]}\in V_0$ but it does not belong to $V_1$. So we can not make use of Theorem \ref{m}. Hence, we shall define a \textit{nonstationary  multiresolution analysis} in connection with twisted translates in order to obtain a wavelet {\it Riesz} basis for $L^2(\R^2)$. 
\end{remark}
Before giving the definition, let us define $\lambda$-twisted translations.
\begin{definition}
Let $f\in L^2(\R^2)$ and $\lambda\in\R^*:=\R\setminus\{0\}$. For $(k,l)\in\z^2$, the $\lambda$-twisted translation of $f$, denoted by $(\t)^\lambda f$, is defined to be
\begin{align*}
(\t)^\lambda f(x,y):=e^{\pi i\lambda(lx-ky)}f(x-k,y-l),\ \ \ (x,y)\in\R^2.
\end{align*}
\end{definition}
For the above definition and some properties of $\lambda$-twisted translations, we refer to \cite{saswatahouston}. 
\begin{definition}
A sequence of closed subspaces $\{V_j:j\in\z\}$ of $L^2(\R^2)$ is said to generate a nonstationary twisted {\it multiresolution analysis} of $L^2(\R^2)$ if
\begin{enumerate}[(i)]
\item $V_j\subset V_{j+1},\ \forall\ j\in\z$.
\item $\overline{\bigcup\limits_{j\in\z}V_j}=L^2(\R^2)$ and $\bigcap\limits_{j\in\z} V_j=\{0\}$.
\item For $j\in\z$, there exists $\Phi_j\in V_j$ such that $\{D_{2^{-j}}(T^t_{(2^{-j}k,2^{-j}l)})^{2^{2j}}\Phi_j:k,l\in\z\}$ is a {\it Riesz} basis for $V_j$.  
\end{enumerate}
We refer to the functions $\Phi_j$ are $\lambda$-twisted scaling functions at level $j$.
\end{definition}
Then the usual arguments of {\it multiresolution analysis} lead to the fact that if there exists a function $\Psi_j$, for each $j\in\z$, such that 
\be
\{D_{2^{-j}}(T^t_{(2^{-j}k,2^{-j}l)})^{2^{2j}}\Psi_j:k,l\in\z\}
\ee
is a {\it Riesz} basis for $W_j$, then $\{D_{2^{-j}}(T^t_{(2^{-j}k,2^{-j}l)})^{2^{2j}}\Psi_j:j,k,l\in\z\}$ is a \textit{Riesz} basis for $L^2(\R^2)$. The elements of this collection at level $j$ are termed $\lambda$-twisted wavelets.

\begin{example}\label{p}
Let $\p=\p_1$. Let $\Phi_j=N_{2j}$, where $N_j :=2^j\p(2^{j}(\cdot, \cdot))$, $\forall\ j\in\z$. Define $V_j :=\overline{\Span\{D_{2^{-j}}(T^t_{(2^{-j}k,2^{-j}l)})^{2^{2j}}\Phi_j:k,l\in\z\}}$, $j\in\z$. For $j\in\z$, we have
\begin{align*}
D_{2^{-j}}(T^t_{(2^{-j}k,2^{-j}l)})^{2^{2j}}\Phi_j(x,y)=2^je^{\pi i(lx-ky)}\chi_{[k,k+2^{-j}]\times[l,l+2^{-j}]}(x,y).
\end{align*}
Hence $[k,k+2^{-(j+1)}]\times[l,l+2^{-(j+1)}]\subset[k,k+2^{-j}]\times[l,l+2^{-j}]$. This leads to $V_j\subset V_{j+1}$. Let $f\in\bigcap\limits_{j\in\z} V_j$. Then $f(x,y)=c\,e^{\pi i(lx-ky)}\chi_{(-\infty,\infty)\times(-\infty,\infty)}(x,y)$, for some constant $c$. Then $|f(x,y)|=|c|\,\chi_{(-\infty,\infty)\times(-\infty,\infty)}(x,y)$. If $f\in L^2(\R^2)$, then $|f|\in L^2(\R^2)$, which forces $c=0$ and which in turn implies $f=0$, proving that $\bigcap\limits_{j\in\z} V_j=\{0\}$. An element in $\bigcup\limits_{j\in\z}V_j$ can be written as a product of functions $g\cdot u$, where $|g|=1$ and $u$ belongs to the class of piecewise constant functions on $\R^2$. Hence $\bigcup\limits_{j\in\z}V_j$ contains the class of piecewise constant functions on $\R^2$ which implies that $\overline{\bigcup\limits_{j\in\z}V_j}=L^2(\R^2)$. We claim that $\{D_{2^{-j}}(T^t_{(2^{-j}k,2^{-j}l)})^{2^{2j}}\Phi_j:k,l\in\z\}$ is a {\it Riesz} basis for $V_j$ for each fixed $j\in\z$. To this end, let $\{\alpha_{k,l}\}\in\ell^2(\z^2)$. We need to prove that there exist $A,B>0$ such that
\begin{align*}
A\,\|\{\alpha_{k,l}\}\|^2_{\ell^2(\z^2)}\leq \bigg\|\sum_{k,l\in\z}\alpha_{k,l}D_{2^{-j}}(T^t_{(2^{-j}k,2^{-j}l)})^{2^{2j}}\Phi_j\bigg\|^2\leq B\,\|\{\alpha_{k,l}\}\|^2_{\ell^2(\z^2)}.
\end{align*}
But by the choice of our $\Phi_j$, $D_{2^{-j}}(T^t_{(2^{-j}k,2^{-j}l)})^{2^{2j}}\Phi_j=\t N_j$. Now consider
\begin{align*}
S&:=\bigg\|\sum_{k,l\in\z}\alpha_{k,l}D_{2^{-j}}(T^t_{(2^{-j}k,2^{-j}l)})^{2^{2j}}\Phi_j\bigg\|^2\\
&=\bigg\|\sum_{k,l\in\z}\alpha_{k,l}\t N_j\bigg\|^2\\
&=\sum_{k,l,k^\prime,l^\prime\in\z}\alpha_{k,l}\bar{\alpha}_{k^\prime,l^\prime}\langle\t N_j,T^t_{(k^\prime,l^\prime)}N_j\rangle.
\end{align*}
But
\begin{align*}
\langle\t N_j,T^t_{(k^\prime,l^\prime)}N_j\rangle&=2^je^{\pi i(lk^\prime-kl^\prime)}\langle T^t_{(k-k^\prime,l-l^\prime)}N_j,N_j\rangle\\
&=2^je^{\pi i(lk^\prime-kl^\prime)}\int_{x=0}^{2^{-j}}\int_{y=0}^{2^{-j}}T^t_{(k-k^\prime,l-l^\prime)}N_j(x,y)~dydx\\
&=2^je^{\pi i(lk^\prime-kl^\prime)}\int_{x=0}^{2^{-j}}\int_{y=0}^{2^{-j}}e^{\pi i((l-l^\prime)x-(k-k^\prime)y)}N_j(x-(k-k^\prime),y-(l-l^\prime))~dydx\\
&=2^je^{\pi i(lk^\prime-kl^\prime)}\int_{x=-(k-k^\prime)}^{2^{-j}-(k-k^\prime)}\int_{y=-(l-l^\prime)}^{2^{-j}-(l-l^\prime)}e^{\pi i((l-l^\prime)x-(k-k^\prime)y)}N_j(x,y)~dydx,\numberthis \label{23}
\end{align*}
by applying a change of variables. It turns out that $k-k^\prime,l-l^\prime \in \{-2^{-j}+1,-2^{-j}+2,\cdots,0,\cdots,2^{-j}-2,2^{-j}-1\}$. In particular, if $j\geq 0$, then $k=k^\prime$ and $l=l^\prime$, which shows that $\{D_{2^{-j}}(T^t_{(2^{-j}k,2^{-j}l)})^{2^{2j}}\Phi_j:k,l\in\z\}$ is an orthonormal basis for $V_j$ for $j\geq 0$.\\

Now assume that $j<0$. Then
\begin{align*}
S&=\sum_{k,l\in\z}|\alpha_{k,l}|^2+\sum_{k,l\in\z}\sum_{(r,s)\in A_j}e^{\pi i (ks-lr)}\alpha_{k,l}\bar{\alpha}_{k-r,l-s}\langle T^t_{(r,s)}N_j,N_j\rangle\\
&=: \sum_{k,l\in\z}|\alpha_{k,l}|^2+R,
\end{align*}
with $A_j:=\{(r,s)\in\z^2\setminus\{(0,0)\}:r,s \in \{-2^{-j}+1,-2^{-j}+2,\cdots,0,\cdots,2^{-j}-2,2^{-j}-1\}\}$. Then $A_j$ is the disjoint union of $A^{(1)}_j$ and $A^{(2)}_j$ where $A^{(1)}_j:=\{(0,s):s\in \{1,2,\cdots,2^{-j}-1\}\}\cup\{(r,s)\in\z^2:r\in\{1,2,\cdots,2^{-j}-1\},s\in \{-2^{-j}+1,-2^{-j}+2,\cdots,0,\cdots,2^{-j}-2,2^{-j}-1\}\}$ and $A^{(2)}_j=\{(r,s)\in\z^2:(-r,-s)\in A^{(1)}_j\}$. We notice that $\#(A^{(1)}_j)=\#(A^{(2)}_j)=2^{-j+1}(2^{-j}-1)$ and hence $\#(A_j)=2^{-j+2}(2^{-j}-1)$. (Here, $\#$ denotes the cardinality of a set.) Now, $R$, as defined above, can be expressed as
\begin{align*}
R&=\sum_{k,l\in\z}\bigg[\sum_{(r,s)\in A^{(1)}_j}e^{\pi i (ks-lr)}\alpha_{k,l}\bar{\alpha}_{k-r,l-s}\langle T^t_{(r,s)}N_j,N_j\rangle+\sum_{(r,s)\in A^{(2)}_j}e^{\pi i (ks-lr)}\alpha_{k,l}\bar{\alpha}_{k-r,l-s}\langle T^t_{(r,s)}N_j,N_j\rangle\bigg]\\
&=: R_1+R_2.
\end{align*}
We observe that $R_2=\bar{R}_1$ from which it follows that
\begin{align*}
|R|&\leq 2\sum_{(r,s)\in A^{(1)}_j}|\langle T^t_{(r,s)}N_j,N_j\rangle|\sum_{k,l\in\z}|\alpha_{k,l}||\alpha_{k-r,l-s}|\\
&\leq 2\|\{\alpha_{k,l}\}\|^2_{\ell^2(\z^2)}\sum_{(r,s)\in A^{(1)}_j}|\langle T^t_{(r,s)}N_j,N_j\rangle|. \numberthis \label{20} 
\end{align*}
If at least one of $r$ and $s$ is equal to zero, then \eqref{23} implies that $\langle T^t_{(r,s)}N_j,N_j\rangle=0$. For $r\neq 0$ and $s\neq 0$, a straightforward computation using \eqref{23} leads to
\begin{align}\label{21}
\langle T^t_{(r,s)}N_j,N_j\rangle&=
\begin{cases}
\dfrac{2^{2j+1}}{\pi^2rs}(1-\cos(\pi rs)), & r,s\in \{1,2,\cdots,2^{-j}-1\}\\ \\
-\dfrac{2^{2j+1}}{\pi^2rs}(1-\cos(\pi rs)), & r\in\{1,2,\cdots,2^{-j}-1\},\ s\in \{-2^{-j}+1,-2^{-j}+2,\cdots,-1\}.
\end{cases}
\end{align}
In either of these cases,
\begin{align*}
|\langle T^t_{(r,s)}N_j,N_j\rangle|&=\frac{2^{2j+1}}{\pi^2rs}(1-\cos(\pi rs))\\
&=\frac{2^{2j+2}}{\pi^2rs}\sin^2(\frac{\pi rs}{2})\\
&=\frac{2^{2j+1}}{\pi}\sinc(\frac{\pi rs}{2})\sin(\frac{\pi rs}{2})\\
&\leq\frac{2^{2j+1}}{\pi}.
\end{align*}
Furthermore, \eqref{21} implies that $\langle T^t_{(r,s)}N_j,N_j\rangle=0$ if at least any one of $r$ and $s$ is even. Therefore, let $B^{(1)}_j$ be the subset of $A^{(1)}_j$ for which $\langle T^t_{(r,s)}N_j,N_j\rangle\neq 0$. Then, \eqref{20} reduces to
\begin{align*}
|R|&\leq 2\|\{\alpha_{k,l}\}\|^2_{\ell^2(\z^2)}\sum_{(r,s)\in B^{(1)}_j}|\langle T^t_{(r,s)}N_j,N_j\rangle|\\
&\leq \Big(\frac{2^{2j+2}}{\pi}\#(B^{(1)}_j)\Big)\|\{\alpha_{k,l}\}\|^2_{\ell^2(\z^2)}.\numberthis \label{22}
\end{align*}
But $B^{(1)}_j$ is the set of pairs $(r,s)$, where both $r$ and $s$ are odd. Thus $\#(B^{(1)}_j)=2^{-2j-1}$. Hence, \eqref{22} gives $|R|\leq \frac{2}{\pi}\|\{\alpha_{k,l}\}\|^2_{\ell^2(\z^2)}$, from which it follows that
\begin{align*}
\Big(1-\frac{2}{\pi}\Big)\|\{\alpha_{k,l}\}\|^2_{\ell^2(\z^2)}\leq S\leq \Big(1+\frac{2}{\pi}\Big)\|\{\alpha_{k,l}\}\|^2_{\ell^2(\z^2)},
\end{align*}
proving our assertion. Thus, if one can find a function $\Psi_j$, for each $j\in\z$ in $L^2(\R^2)$, such that 
\[
\{D_{2^{-j}}(T^t_{(2^{-j}k,2^{-j}l)})^{2^{2j}}\Psi_j:k,l\in\z\}
\]
is a {\it Riesz} basis for $W_j\ (V_j\oplus W_j=V_{j+1})$, then
\[
\{D_{2^{-j}}(T^t_{(2^{-j}k,2^{-j}l)})^{2^{2j}}\Psi_j:j,k,l\in\z\}
\]
is a \textit{Riesz} basis for $L^2(\R^2)$. We leave it as an open question to the interested reader.
\end{example}

In Figure \ref{fig2} below, we have depicted $D_{2^{-j}}(T^t_{(2^{-j}k,2^{-j}l)})^{2^{2j}}\Phi_j\in V_j$ for two triples $(j,k,l)$.\\

\begin{figure}[h!]
\begin{center}
\includegraphics[width=5cm, height= 4cm]{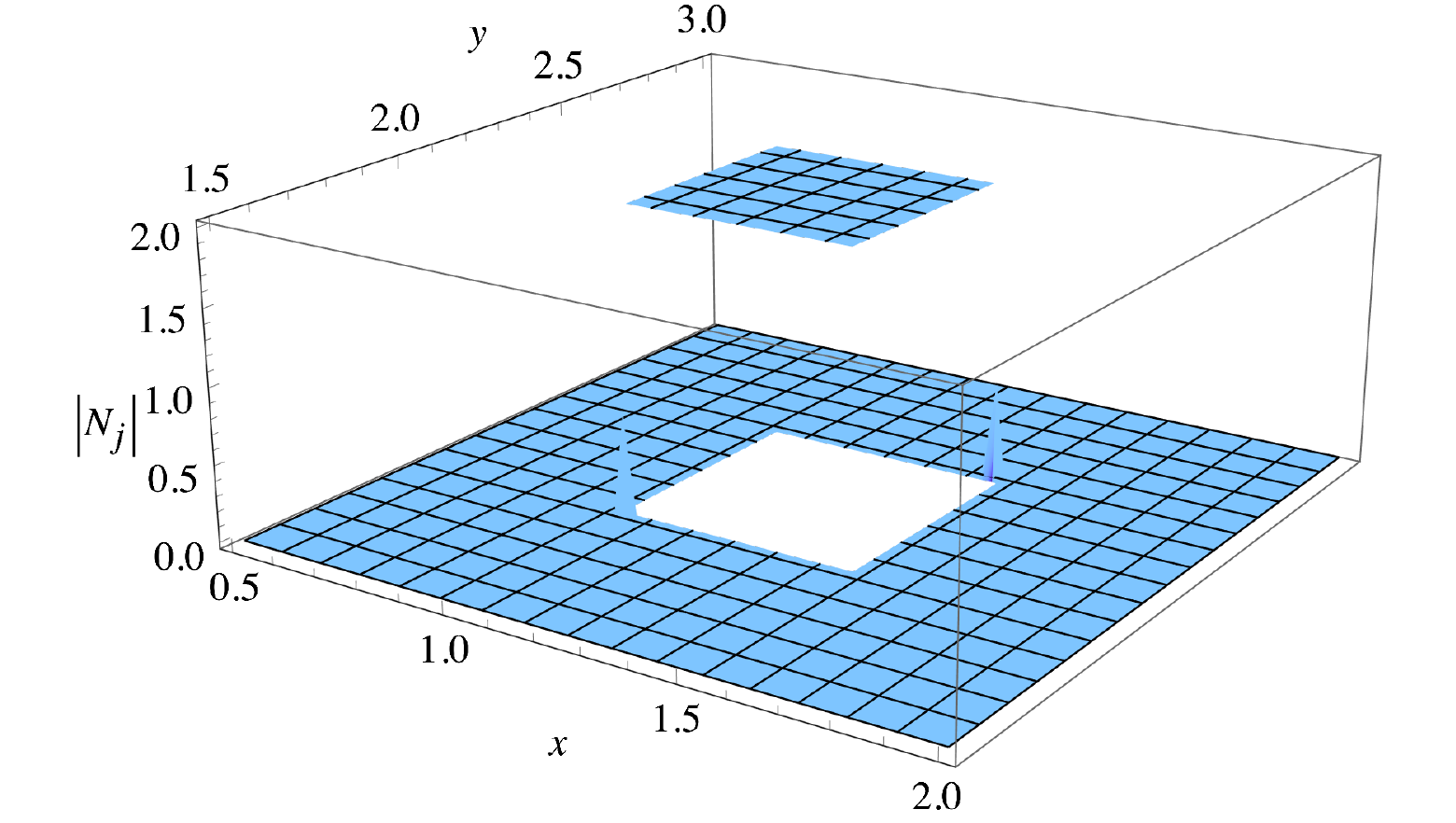}\hspace{2cm}
\includegraphics[width=5cm, height= 4cm]{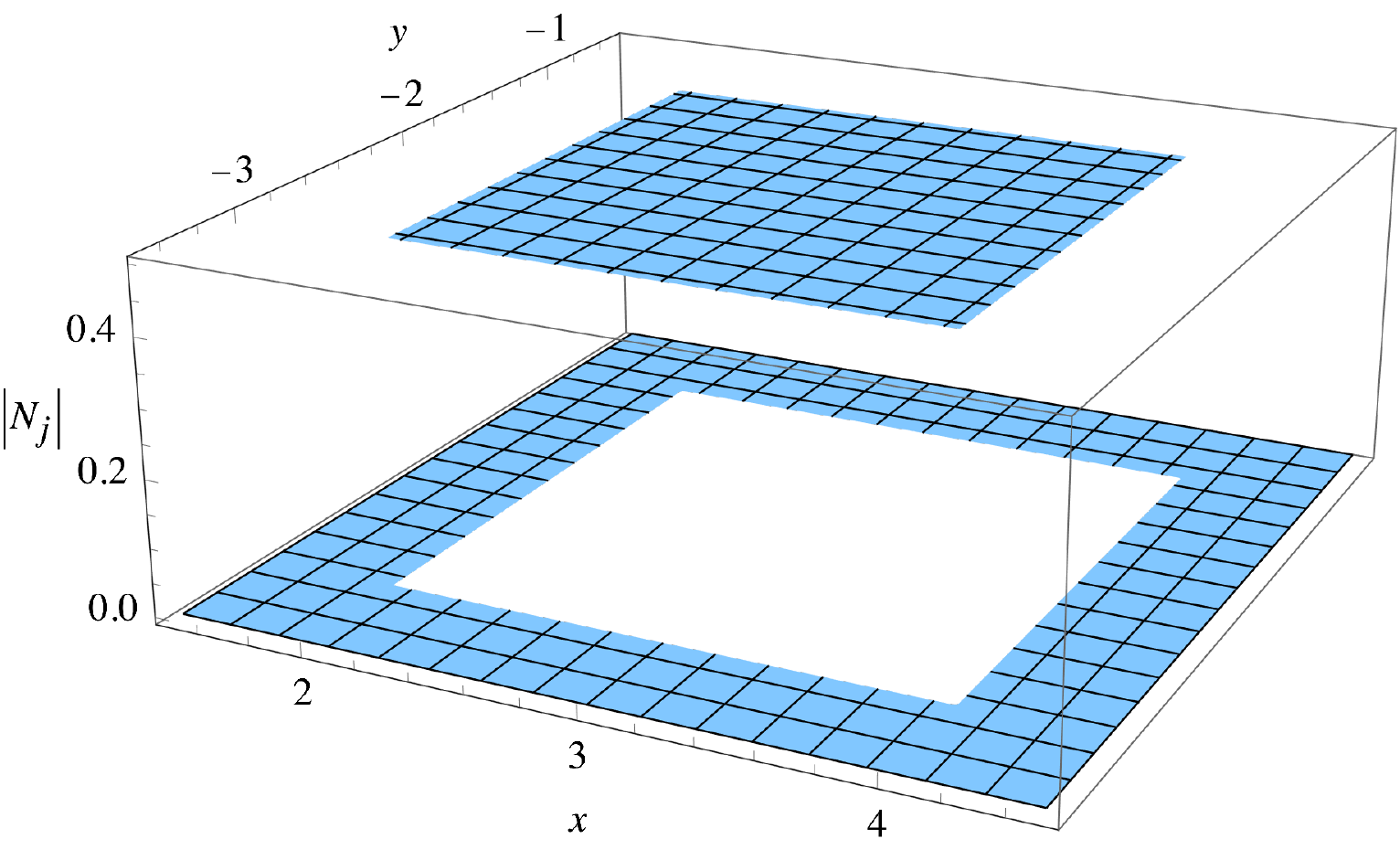}
\end{center}
\end{figure}
\begin{figure}[h!]
\begin{center}
\includegraphics[width=5cm, height= 4cm]{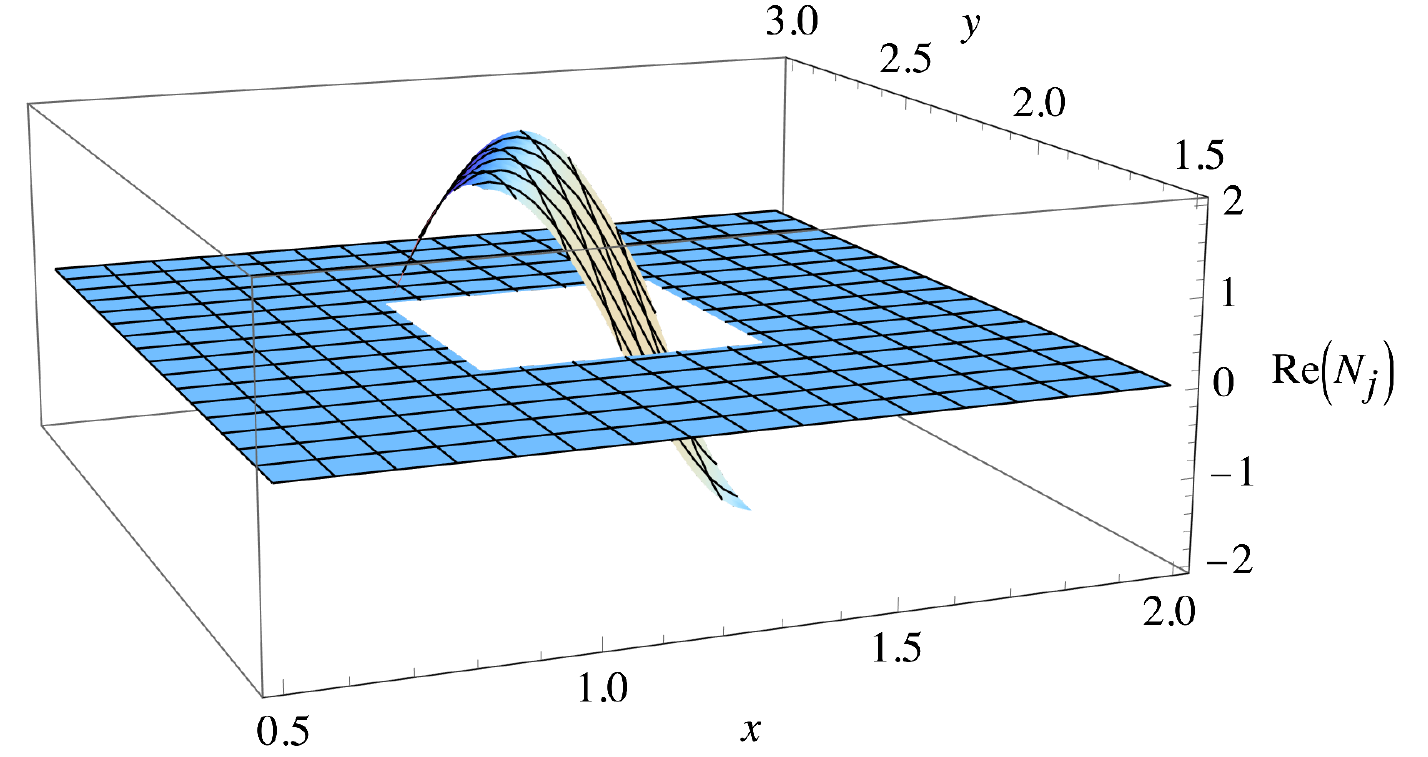}\hspace{2cm}
\includegraphics[width=5cm, height= 4cm]{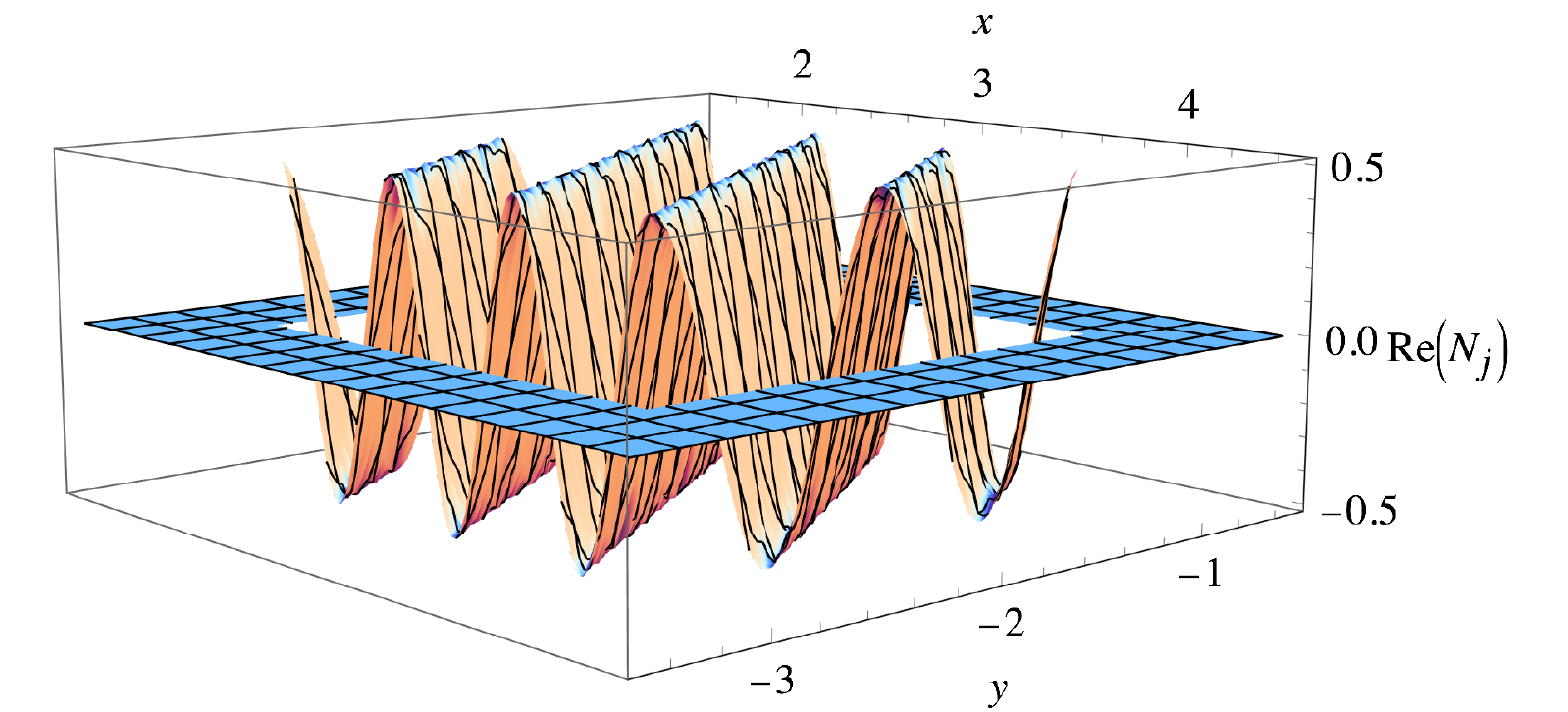}
\end{center}
\end{figure}
\begin{figure}[h!]
\begin{center}
\includegraphics[width=5cm, height= 4cm]{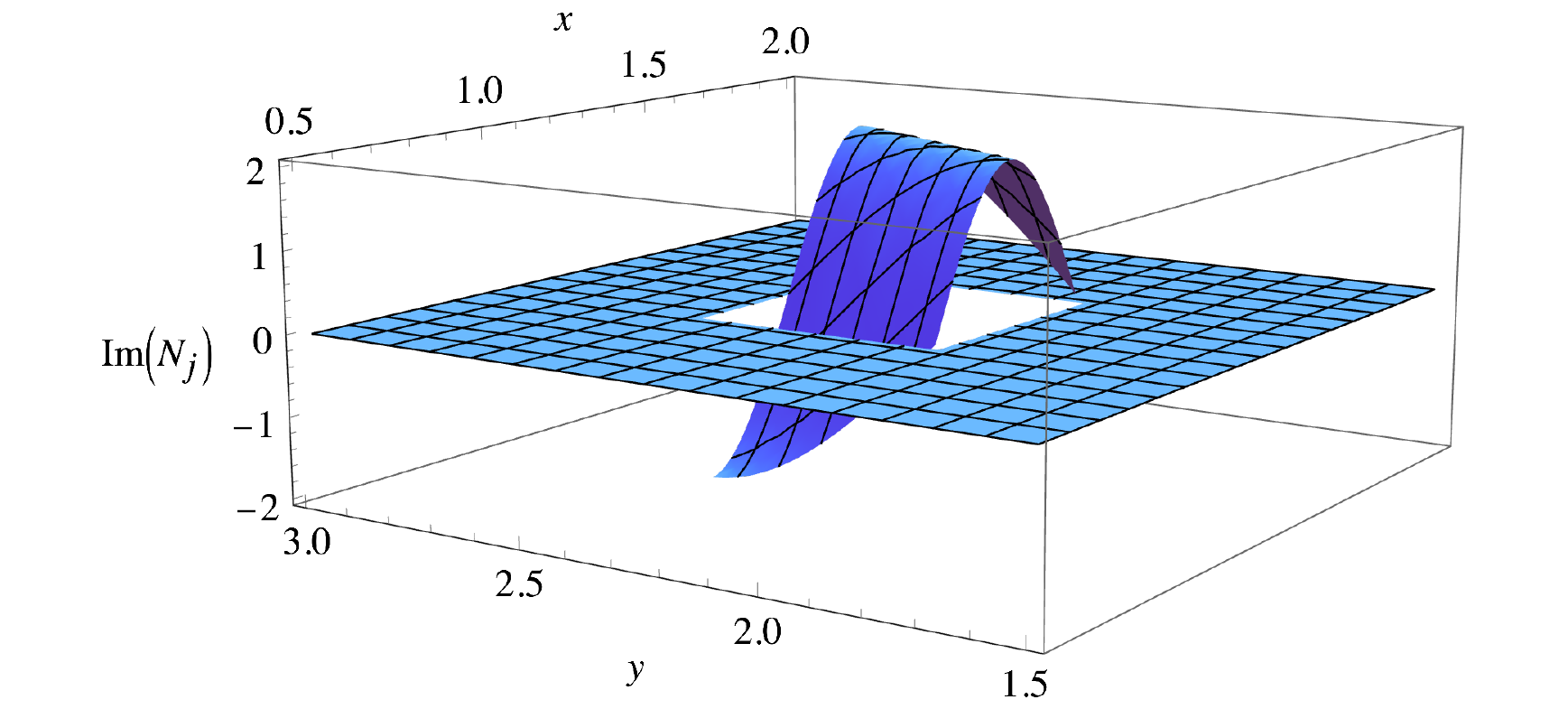}\hspace{2cm}
\includegraphics[width=5cm, height= 4cm]{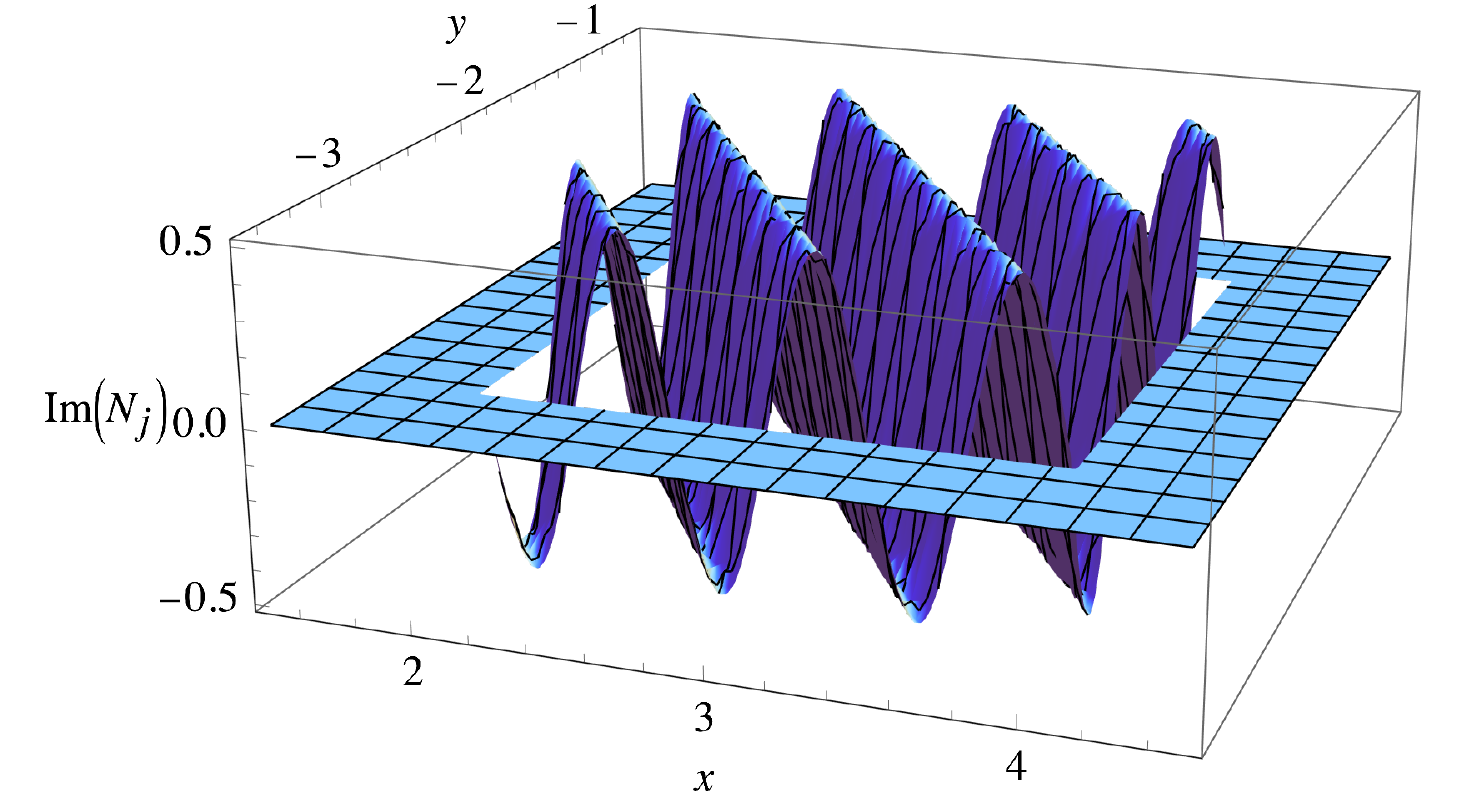}
\caption{For $(j,k,l) = (1,1,2)$ (left) and $(j,k,l) = (-1,2,-3)$ (right), the modulus (upper row), the real part (middle row), and the imaginary part (lower row) of $D_{2^{-j}}(T^t_{(2^{-j}k,2^{-j}l)})^{2^{2j}}\Phi_j$ are shown.}\label{fig2}
\end{center}
\end{figure}

\section*{Acknowledgement}                                                                                                                                                                                                                                                                                                                                                                                                                                                                                                                                                                                                                                                                                                                                                                                                                                                                                                                                                                                                                                                                                                                                                                                                                                                                                                                                                                                                                                                                                                                                                                                                                                                                                                                                                                                                                                                                                                                                                                                                                                                                                                                                                                                                                                                                                                                                                                                                                                                                                                                                                                                                                                                                                                                                                                                                                                                                                                                                                                                                                                                                                                                                                                                                                                                                                                                                                                                                                                                                                                                                                                                                                                                                                                                                                                                                                                                                                                                                                                                                                                                                                                                                                                                                                                     
We thank the referee for carefully reading the manuscript and giving us valuable suggestions. One of the authors (R.R) was partially supported by the Visiting Professor Program funded by the Bavarian State Ministry for Science, Research and the Arts, TUM International Center, for a stay at the Technical University Munich, Germany, for the period October to December 2019. She sincerely thanks Professor Massimo Fornasier and Professor Peter Massopust, TUM, for their kind invitation and excellent hospitality for the entire period of her visit from October 2019 to September  2020.  

\bibliographystyle{amsplain}
\bibliography{splines}
\end{document}